\newtheorem{theorem}{Theorem}[section]
\newtheorem{lemma}[theorem]{Lemma}
\newtheorem{Corollary}[theorem]{Corollary}
\newtheorem{Proposition}[theorem]{Proposition}
\newdefinition{definition}[theorem]{Definition}
\newdefinition{remark}[theorem]{Remark}
\newdefinition{example}[theorem]{Example}
\newproof{proof}{Proof}
\journal{}
\begin{document}

\begin{frontmatter}



\title{The Algebraic Structure of Finitely Generated $L^{0}(\mathcal{F},K)$-Modules and the Helly Theorem in Random Normed Modules\tnoteref{t1}}

\tnotetext[t1]{Supported by NNSF No. 10871016}

\author[gt]{Tiexin Guo\corref{cor1}}
\ead{txguo@buaa.edu.cn}
\cortext[cor1]{Corresponding author}
\address[gt]{LMIB and School of Mathematics and Systems Science, Beihang University, Beijing 100191, P.R. China}

\author[gs]{Guang Shi}
\ead{g\_shi@ss.buaa.edu.cn}
\address[gs]{LMIB and School of Mathematics and Systems Science, Beihang University, Beijing 100191, P.R. China}

\begin{abstract}
Let $K$ be the scalar field of real numbers or complex numbers and $L^{0}(\mathcal{F},K)$ the algebra of equivalence classes of $K-$valued random variables defined on a probability space $(\Omega,\mathcal{F},P)$. In this paper, we first characterize the algebraic structure of finitely generated $L^{0}(\mathcal{F},K)$-modules and then combining the recently developed separation theorem in random locally convex modules
we prove the Helly theorem in random normed modules with the countable concatenation property under the framework of random conjugate spaces at the same time a simple counterexample shows that it is necessary to require the countable concatenation property. By the way,we also give an application to the existence problem of the random solution of a system of random linear functional equations.

\end{abstract}

\begin{keyword}


Finitely generated $L^{0}(\mathcal{F},K)$-module \sep random normed module \sep
Helly theorem

\MSC[2010] 16D70 \sep 46A20 \sep 46H25
\end{keyword}

\end{frontmatter}


\section{Introduction and main results}
\label{}
In 1942, K. Menger thought that the distance between two points in a real space is random and so he presented a probabilistic generalization of a classical metric space, namely the notion of a probabilistic metric space (briefly, a $PM$ space) in which the distance between two points is described by a probability distribution function.  Subsequently, the theory of $PM$ spaces was founded and deeply developed by B. Schweizer and A. Sklar \cite{SS}. Following K. Menger's idea, A. N. Serstnev presented the notion of a probabilistic normed space (briefly, a $PN$ space) in 1962, then in 1993 C. Alsina, B. Schweizer and A. Sklar redefined $PN$ spaces in a more general way in \cite{CBA1} and in 1997 they and C. Sempi presented the notion of a probabilistic inner product space (briefly, a $PIP$ space) in \cite{CBA2}. $PN$ spaces are usually endowed with a natural topology, called the $(\varepsilon,\lambda)$-topology, so that they are metrizable linear topological spaces under a mild condition \cite{CBA3}, see \cite{BJC1,BC,CS} for the closely related studies of $PN$ spaces. Since $PN$ spaces are rarely locally convex spaces, for example, Menger $PN$ spaces under a $t$-norm other than the $t$-norm Min are not locally convex spaces in general, even they do not admit a nontrival continuous linear functional, and so the theory of traditional conjugate spaces universally fails to serve for the deep development of $PN$ spaces. Considering the fundamental importance of the theory of traditional conjugate spaces in functional analysis, a natural problem is: whether does there exist a proper probabilistic generalization of the theory of traditional conjugate spaces which perfectly matches the theory of general $PN$ spaces? As stated in \cite{SS1}, this problem is still an open and challenging problem. Actually, in the last 15 years the development of random metric theory is closely related to this problem, in fact, this problem has been satisfactorily solved within random metric theory \cite{rela}.

Random metric theory originated from the theory of probabilistic metric spaces. The random distance between two points in an original random metric space (briefly, an $RM$ space) is a nonnegative random variable defined on some probability space \cite[Chapter 9]{SS}, similarly, the random norm of a vector in an original random normed space (briefly, an $RN$ space) is a nonnegative random variable defined on some probability space \cite[Chapter 15]{SS}. The development of $RN$ spaces in the direction of functional analysis led us to present the new versions of an $RM$ and $RN$ space in \cite{base}, where the random distances or random norms are defined to be the equivalence classes of nonnegative random variables according to the new versions. Since an $RN$ space under the $(\varepsilon,\lambda)$-topology is not a locally convex space in general, the theory of traditional conjugate spaces universally fails to serve for the theory of $RN$ spaces. Based on the new version of an $RN$ space we presented a definitive definition of the random conjugate space for an $RN$ space, further the deep development of the theory of random conjugate spaces led us to present the notion of a random normed module (briefly, an $RN$ module) in \cite{base}, which is the elaboration of the notion of the original $RN$ module introduced in \cite{exte}. With the notions of $RN$ modules and their random conjugate spaces at hand, we have developed deeply and systematically the theory of $RN$ modules under the $(\varepsilon,\lambda)$-topology \cite{Radon,Riesz,dual,james,bana}. An interesting phenomenon is: some classical theorems such as the Riesz's representation theorem in Hilbert spaces and the James theorem in Banach spaces still hold in complete random inner product modules (briefly, $RIP$ modules) and complete $RN$ modules, respectively \cite{Riesz, james}, whereas the others such as the classical Banach-Alaoglu theorem and Banach-Bourbaki-Kakutani-$\check{\textmd{S}}$mulian theorem do not universally hold in our random setting \cite{bana}.

The classical Helly theorem \cite{Helly}, as one of the basic theorems in functional analysis, is a powerful tool for the study of a system of linear functional equations, which says that if $X$ is a normed space over the scalar field $K$, $f_{1},f_{2},\cdots, f_{n}$ are any given bounded linear functionals on $X$, $\alpha_{1},\alpha_{2},\cdots, \alpha_{n}$ are any given constants in $K$ and $\beta$ is any given nonnegative number, then for any positive number $\varepsilon$ there exists $x_{\varepsilon}\in X$ such that the following conditions are satisfied

(1) $f_{i}(x_{\varepsilon})=\alpha_{i}$ for any $i$ such that $1\leqslant i\leqslant n$,

(2) $\|x_{\varepsilon}\|\leqslant \beta+\varepsilon$

\noindent if and only if $|\Sigma_{i=1}^{n}\lambda_{i}\alpha_{i}|\leqslant \beta\|\Sigma_{i=1}^{n}\lambda_{i}f_{i}\|$ holds for all $\lambda_{1},\lambda_{2},\cdots,\lambda_{n}\in K$.

Then, does the interesting Helly theorem hold in $RN$ modules under the framework of random conjugate spaces? The purpose of this paper is to give an affirmative answer for $RN$ modules with the countable concatenation property. The answer to this problem will involve both the algebraic characterization of finitely generated $L^{0}(\mathcal{F},K)$-Modules (see Theorem \ref{thm1} below) and the recently developed separation theorem in random locally convex modules under the locally $L^{0}$-convex topology (see Lemma \ref{lem6} below).

The notion of a random locally convex module was first introduced in \cite{pro} and deeply developed under the $(\varepsilon,\lambda)$-topology in \cite{acha,sepa} for the further development of the theory of $RN$ modules. In 2009, motivated by financial applications, Filipovi\'{c}, Kupper and Vogelpoth presented in \cite{DMN} a new topology (called the locally $L^{0}$-convex topology) for a random locally convex module and proved that the theory of a Hausdorff locally $L^{0}$-convex module introduced in \cite{DMN} is equivalent to the theory of a random locally convex module endowed with the locally $L^{0}$-convex topology. Subsequently, the relations between some basic results derived from the two kinds of topologies were studied in \cite{rela}. Now, random locally convex modules and in particular random normed modules together with their random conjugate spaces have been a proper framework for $L^{0}$-convex analysis playing a crucial role in the study of conditional risk measures \cite{rece}. As shown in \cite{rece}, the $(\varepsilon,\lambda)$-topology and the locally $L^{0}$-convex topology have their respective advantages and disadvantages and they can complement each other in the study of random locally convex modules, for example, the $(\varepsilon,\lambda)$-topology is very natural but too weak to ensure that a random locally convex module has even an $L^{0}$-convex open proper subset, whereas the locally $L^{0}$-convex topology is too strong but can guarantee that a random locally convex module has rich $L^{0}$-convex open subsets so that Filipovi\'{c}, Kupper and Vogelpoth can prove a separation theorem \cite[Theorem 2.6]{DMN} between the two $L^{0}$-convex subsets if either of them is open in the locally $L^{0}$-convex topology, their result was further generalized to a more general form \cite[Theorem 3.15]{rela}. Besides a variant of \cite[Theorem 3.15]{rela} (see Lemma \ref{lem6} below), the following key result, namely Theorem 1.1 below, concerning the algebraic characterization of the structure of finitely generated $L^{0}(\mathcal{F},K)$-modules is in particular crucial in this paper.

To introduce the two main results of this paper, we first recall some notation and terminology as follows. Throughout the paper, $K$ always denotes the scalar field $R$ of real numbers or $C$ of complex numbers, $(\Omega,\mathcal{F},P)$ a probability space and $L^{0}(\mathcal{F},K)$ the algebra of equivalence classes of $K$-valued random variables on $(\Omega,\mathcal{F},P)$ under the ordinary addition, multiplication and scalar multiplication operations on equivalence classes. A left module $E$ over the algebra $L^{0}(\mathcal{F},K)$ (briefly, an $L^{0}(\mathcal{F},K)$-module) is called finitely generated if there exist finitely many elements $x_{1},x_{2},\cdots,x_{n}$ in $E$ such that $E=\{\sum_{i=1}^{n}\xi_{i}x_{i}~|~\xi_{i}\in L^{0}(\mathcal{F},K),1\leqslant i\leqslant n\}$. Obviously, for an $\mathcal{F}$-measurable subset $A$ of $\Omega$ and an $L^{0}(\mathcal{F},K)$-module $E$, $\tilde{I}_{A}E:=\{\tilde{I}_{A}x~|~x\in E\}$, called the $A$-stratification of $E$, is a left module over the algebra $\tilde{I}_{A}L^{0}(\mathcal{F},K):=\{\tilde{I}_{A}\xi~|~\xi\in L^{0}(\mathcal{F},K)\}$, where $\tilde{I}_{A}$ is the equivalence class determined by the characteristic function of $A$. Further, for an $\mathcal{F}$-measurable subset $A$ of $\Omega$,  an $L^{0}(\mathcal{F},K)$-module $E$ is said to be free on $A$ if the $A$-stratification of $E$ is free over the algebra $\tilde{I}_{A}L^{0}(\mathcal{F},K)$.

A finite partition $\{A_{0},A_{1},\cdots,A_{n}\}$ of $\Omega$ to $\mathcal{F}$ means that $A_{i}\in \mathcal{F}$, $A_{i}\cap A_{j}=\emptyset(i\neq j)$ for any $i$ and $j$ in $\{0,1,2,\cdots,n\}$ and
$\Omega=\bigcup_{i=0}^{n}A_{i}$. Obviously, for any finite partition $\{A_{0},A_{1},\cdots,A_{n}\}$ of $\Omega$ to $\mathcal{F}$, it always holds that an $L^{0}(\mathcal{F},K)$-module $E$ equals $\bigoplus_{i=0}^{n}\tilde{I}_{A_{i}}E$, where the right side stands for the direct sum of the submodules $\tilde{I}_{A_{i}}E$. But for a finitely generated $L^{0}(\mathcal{F},K)$-module $E$, we can find a useful direct sum decomposition as our first main result of this paper --- Theorem 1.1 below --- exhibits:

\begin{theorem}\label{thm1}
Let $E$ be a finitely generated $L^{0}(\mathcal{F},K)$-module. Then there exists a finite partition $\{A_{0},A_{1},\cdots,A_{n}\}$ of $\Omega$ to $\mathcal{F}$ such that $\tilde{I}_{A_{i}}E$ is a free module of rank $i$ over the algebra $\tilde{I}_{A_{i}}L^{0}(\mathcal{F},K)$ for each $i\in \{0,1,2,\cdots,n\}$ satisfying $P(A_{i})>0$, in which case $E=\bigoplus_{i=0}^{n}\tilde{I}_{A_{i}}E$ and each such $A_{i}$ is unique in the sense of almost sure equality.
\end{theorem}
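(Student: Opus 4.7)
For the base cases, $n=0$ is trivial (take $A_0 = \Omega$), and $n=1$ is the key. Writing $E = L^{0}(\mathcal{F},K)\,x$, I would show that the annihilator ideal $\mathrm{Ann}(x) \subseteq L^{0}(\mathcal{F},K)$ is generated by an idempotent $\tilde{I}_{A_0}$ for some $A_0 \in \mathcal{F}$. Here the von Neumann regularity of $L^{0}(\mathcal{F},K)$ is essential: if $\xi x = 0$, the pseudo-inverse $\eta$ (defined by $\eta(\omega) = 1/\xi(\omega)$ on $\{\xi \ne 0\}$ and $0$ elsewhere) satisfies $\eta\xi = \tilde{I}_{\{\xi \ne 0\}}$, whence $\tilde{I}_{\{\xi \ne 0\}} x = 0$. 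Hence the collection $\mathcal{C} := \{A \in \mathcal{F} : \tilde{I}_A x = 0\}$ is hereditary and closed under finite unions (via $\tilde{I}_{A \cup B} = \tilde{I}_A + \tilde{I}_B - \tilde{I}_{A \cap B}$); taking $A_0$ to be its essential supremum and $A_1 = \Omega \setminus A_0$, the maximality of $A_0$ forces every $\tilde{I}_{A_1}\xi$ annihilating $x$ to vanish, so $\tilde{I}_{A_1}E$ is $\tilde{I}_{A_1}L^{0}(\mathcal{F},K)$-free of rank one.

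For the inductive step, let $E' = \langle x_1, \ldots, x_{n-1}\rangle$ and apply the induction hypothesis to obtain a partition $\{B_0, \ldots, B_{n-1}\}$ together with a free basis $\{y_1^{(j)}, \ldots, y_j^{(j)}\}$ of each nontrivial $\tilde{I}_{B_j}E'$. On each such $B_j$, let $C_j \subseteq B_j$ be the essential supremum of measurable subsets $C$ on which $\tilde{I}_C x_n$ can be written as an $\tilde{I}_C L^{0}(\mathcal{F},K)$-linear combination of the $\tilde{I}_C y_i^{(j)}$, and set $D_j = B_j \setminus C_j$. On $C_j$, $\tilde{I}_{C_j}E$ coincides with $\tilde{I}_{C_j}E'$ and remains free of rank $j$; on $D_j$, the maximality of $C_j$ combined with the $n=1$ argument applied to the coefficient of $x_n$ in any would-be relation forces $\{y_1^{(j)}, \ldots, y_j^{(j)}, x_n\}$ to be $\tilde{I}_{D_j}L^{0}(\mathcal{F},K)$-free of rank $j+1$. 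Assembling $A_k := C_k \cup D_{k-1}$ (with the convention $D_{-1} = \emptyset$) gives the required partition. Uniqueness follows by invariant basis number: if $\{A'_0, \ldots, A'_m\}$ is another such partition, then on any intersection $A_i \cap A'_j$ of positive measure the module $\tilde{I}_{A_i \cap A'_j}E$ would be simultaneously free of ranks $i$ and $j$ over the nonzero commutative ring $\tilde{I}_{A_i \cap A'_j}L^{0}(\mathcal{F},K)$, forcing $i = j$.

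\textbf{The principal obstacle will be rigorously justifying the essential-supremum construction}, particularly of $C_j$ in the inductive step. I must confirm that the family of sets $C$ on which $\tilde{I}_C x_n$ admits a basis representation is closed under taking essential suprema of countable increasing chains, and that the corresponding coefficient tuples $(\alpha_1^{(m)}, \ldots, \alpha_j^{(m)})$ on an approximating chain $C^{(m)} \uparrow C_j$ can be patched into a single coherent tuple on $C_j$ itself. The freeness of the $y_i^{(j)}$ guarantees uniqueness of the local coefficients on overlaps, which is precisely what will make such patching well-defined.
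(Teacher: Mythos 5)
Your architecture is genuinely different from the paper's. The paper works globally: for each $i$ it forms the family $\mathcal{A}_{i}$ of sets on which $E$ is free of rank $i$, shows each family is directed upwards and attains its essential supremum (Lemma 2.2), and rules out a residual stratum by combining the hereditarily-disjoint-stratification lemma (to extract, from an element outside a submodule, a positive-measure set on which it is independent) with an explicit Gaussian-elimination lemma over $L^{0}(\mathcal{F},K)$ (Lemma 2.1, showing $n+1$ elements expressible in $n$ generators are dependent on a set of positive measure). Your induction on the number of generators, with the regularity trick $\eta\xi=\tilde{I}_{[|\xi|>0]}$ and the span-locus sets $C_{j}$, replaces both lemmas and is arguably more elementary; your uniqueness argument via the invariant basis number of commutative rings is exactly the paper's.

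However, the step you flag as the principal obstacle is a genuine gap, and your proposed remedy attacks the wrong half of it. Patching the coefficient tuples along a chain $C^{(m)}\uparrow C_{j}$ is the easy half: they live in $L^{0}(\mathcal{F},K^{j})$, which admits countable concatenations, and freeness gives consistency on overlaps. The hard half is concluding that the patched relation $\tilde{I}_{C_{j}}x_{n}=\sum_{i}\alpha_{i}y_{i}^{(j)}$ holds on all of $C_{j}$, i.e.\ that $z:=\tilde{I}_{C_{j}}x_{n}-\sum_{i}\alpha_{i}y_{i}^{(j)}$, which is annihilated by every $\tilde{I}_{C^{(m)}}$, is $\theta$. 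This local-to-global principle is not a formal consequence of finite generation, and the same issue already undermines your base case: with $\Omega=[0,1]$, Lebesgue measure, and $N=\{\xi\in L^{0}(\mathcal{F},K)~|~\tilde{I}_{[0,\epsilon]}\xi=0 \mbox{ for some } \epsilon>0\}$, the cyclic module $E=L^{0}(\mathcal{F},K)/N$ with generator $x$ the class of $1$ satisfies $\tilde{I}_{(1/(m+1),\,1/m]}x=\theta$ for every $m$, so your family $\mathcal{C}$ has essential supremum $A_{0}=\Omega$, yet $\tilde{I}_{A_{0}}x=x\neq\theta$; hence $A_{0}\notin\mathcal{C}$ and $\mathrm{Ann}(x)$ is not generated by an idempotent. (The paper's Lemma 2.2 quietly relies on the same principle when it asserts the concatenated $y_{j}$ form a basis of $\tilde{I}_{A_{i}}E$ because "$E$ is finitely generated, $E$ has the countable concatenation property": finite generation does give the \emph{existence} of countable concatenations, but what is needed at that point is their \emph{uniqueness}.) So this step cannot be dismissed as a routine verification; you must isolate the additional property of $E$ that makes the essential supremum belong to the family, rather than merely patch coefficients.
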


To introduce our second main result of this paper, we first recall from \cite{line}:
Let $\bar{L}^{0}(\mathcal{F},R)$ be the set of equivalence classes of extended real-valued random variables on $(\Omega,\mathcal{F},P)$, then $\bar{L}^{0}(\mathcal{F},R)$ is partially ordered by $\xi \leqslant\eta$ if and only if $\xi^{0}(\omega) \leqslant \eta^{0}(\omega)$ for $P$--almost all $\omega$ in $\Omega$ (briefly, a.s.), where $\xi^{0}$ and $\eta^{0}$ are arbitrarily chosen representatives of $\xi$ and $\eta$ in $\bar{L}^{0}(\mathcal{F},R)$, respectively. Furthermore, every subset $H$ of $\bar{L}^{0}(\mathcal{F},R)$ has a supremum and infimum, denoted by $\bigvee H$ and $\bigwedge H$, respectively. It is also well known from \cite{line} that $L^{0}(\mathcal{F},R)$, as a sublattice of $\bar{L}^{0}(\mathcal{F},R)$,  is a complete lattice in the sense that every subset with an upper bound has a supremum.

As usual, $\xi >\eta$ means $\xi \geqslant \eta$ and $\xi \neq \eta$, whereas $\xi >\eta$ on $A$
means $\xi^{0}(\omega)>\eta^{0}(\omega)$ a.s. on $A$ for any $A\in \mathcal{F}$ and $\xi$ and $\eta$ in $\bar{L}^{0}(\mathcal{F},R)$, where $\xi^{0}$ and $\eta^{0}$ are arbitrarily chosen representatives of $\xi$ and $\eta$, respectively.

Specially, we denote $L^{0}_{+}(\mathcal{F})=\{\xi \in L^{0}(\mathcal{F},R)~|~ \xi \geqslant 0\}$ and $L^{0}_{++}(\mathcal{F})=\{\xi \in L^{0}(\mathcal{F},R)~|~\xi > 0 ~ \mbox{on} ~ \Omega\}$.

\begin{definition}[\cite{base, rela}]
An ordered pair $(E,\|\cdot\|)$ is called an $RN$ module over $K$ with base $(\Omega,\mathcal{F},P)$ if $E$ is a left module over the algebra $L^{0}(\mathcal{F},K)$ and $\|\cdot \|$ is a mapping from $E$ to $L^{0}_{+}(\mathcal{F})$ such that the following three axioms are satisfied:

\renewcommand{\labelenumi}{$($\arabic{enumi}$)$}
\begin{enumerate}

\item $\|x\|=0$ if and only if $x=\theta$ (the null element of $E$);

\item $\|\xi x\|=|\xi|\|x\|$, $\forall \xi \in L^{0}(\mathcal{F},K)$ and $\forall x \in E$;

\item $\|x+y\|\leqslant \|x\|+\|y\|$, $\forall x,y \in E$,
\end{enumerate}
where the mapping $\|\cdot\|$ is called the $L^{0}$-norm on $E$ and $\|x\|$ is called the $L^{0}$-norm of a vector $x\in E$. Besides, a mapping $\|\cdot \|:E\rightarrow L^{0}_{+}(\mathcal{F})$ only satisfying (2) and (3) above is called an $L^{0}$-seminorm on $E$.
\end{definition}

\begin{definition}[\cite{base,rela}]
A linear operator $f$ from an $RN$ module $(E,\|\cdot\|)$ over $K$ with base $(\Omega,\mathcal{F},P)$ to $L^{0}(\mathcal{F},K)$ is called an a.s. bounded random linear functional on $E$ if there exists some $\xi$ in $L^{0}_{+}(\mathcal{F})$ such that $|f(x)|\leqslant \xi \|x\|$, $\forall x \in E$. Let $E^{*}$ be the linear space of a.s. bounded random linear functionals on $E$, further define the module multiplication $\cdot:L^{0}(\mathcal{F},K)\times E^{*}\rightarrow E^{*}$ by $(\xi \cdot f)(x)=\xi(f(x)),\forall \xi\in L^{0}(\mathcal{F},K),  f\in E^{*}$ and $x\in E$, and the mapping $\|\cdot\|^{*}:E^{*}\rightarrow L^{0}_{+}(\mathcal{F})$ by $\|f\|^{*}=\bigwedge\{\xi\in L^{0}_{+}(\mathcal{F})~|~|f(x)|\leqslant\xi \|x\|,\forall x\in E\}$, $\forall f\in E^{*}$, then it is easy to see that $(E^{*},\|\cdot\|^{*})$ is an $RN$ module over $K$ with base $(\Omega,\mathcal{F},P)$, called the random conjugate space of $(E,\|\cdot\|)$.
\end{definition}

\begin{definition}[\cite{rela}]
Let $E$ be a left module over the algebra $L^{0}(\mathcal{F},K)$. A formal sum $\Sigma_{n\in N}\tilde{I}_{A_{n}}x_{n}$ is called a countable concatenation of a
sequence $\{x_{n},n\in N\}$ in $E$ with respect to a countable partition $\{A_{n},n\in N\}$ of $\Omega$ to $\mathcal{F}$. Moreover, a countable concatenation
$\Sigma_{n\in N}\tilde{I}_{A_{n}}x_{n}$ is well defined or $\Sigma_{n\in N}\tilde{I}_{A_{n}}x_{n}\in E$
if there is $x\in E$ such that $\tilde{I}_{A_{n}}x=\tilde{I}_{A_{n}}x_{n}$, $\forall n\in N$.
A subset $G$ of $E$ is called having the countable concatenation property if every countable
concatenation $\Sigma_{n\in N}\tilde{I}_{A_{n}}x_{n}$ with $x_{n}\in G$ for each $n\in N$
still belongs to $G$, namely $\Sigma_{n\in N}\tilde{I}_{A_{n}}x_{n}$ is well defined and there
exists $x\in G$ such that $x=\Sigma_{n\in N}\tilde{I}_{A_{n}}x_{n}$.
\end{definition}

We can now state our second main result as follows:

\begin{theorem}\label{thm2}
Let $(E,\|\cdot\|)$ be an $RN$ module over $K$ with base $(\Omega,\mathcal{F},P)$ such that $E$ has
the countable concatenation property, $\{f_{1},f_{2},\cdots,f_{n} \}\subset E^{*}$,
$\{\xi_{1}, \xi_{2},\cdots ,\xi_{n} \}\subset L^{0}(\mathcal{F},K)$ and
$\beta \in L^{0}_{+}(\mathcal{F})$.
Then for any $\varepsilon \in L^{0}_{++}(\mathcal{F})$ there exists $x_{\varepsilon} \in E$ such that the following two conditions are satisfied

(1) $f_{i}(x_{\varepsilon})=\xi _{i}$, $i=1,2,\cdots ,n$;

(2) $\|x_{\varepsilon}\|\leqslant \beta+\varepsilon$

\noindent if and only if
$|\sum_{k=1}^{n}\lambda_{k}\xi_{k}|\leqslant \beta\|\sum_{k=1}^{n} \lambda_{k}f_{k}\|^{*}$ holds for any $\lambda_{1},\lambda_{2},\cdots \lambda_{n}\in L^{0}(\mathcal{F},K)$.
\end{theorem}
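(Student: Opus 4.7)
For any $\lambda_1,\ldots,\lambda_n\in L^{0}(\mathcal{F},K)$ and any $x_\varepsilon$ satisfying (1)--(2), applying the $L^{0}$-linear combination $\sum_{k}\lambda_k f_k$ at $x_\varepsilon$ gives
\[
\Bigl|\sum_{k=1}^{n}\lambda_k \xi_k\Bigr|=\Bigl|\Bigl(\sum_{k=1}^{n}\lambda_k f_k\Bigr)(x_\varepsilon)\Bigr|\le(\beta+\varepsilon)\Bigl\|\sum_{k=1}^{n}\lambda_k f_k\Bigr\|^{*}.
\]
Specializing $\varepsilon$ to the constant $1/m\in L^{0}_{++}(\mathcal{F})$ and letting $m\to\infty$ passes to the stated inequality with $\beta$ on the right.

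\textbf{Sufficiency.} The plan is to reduce to a free situation via Theorem \ref{thm1}, carry out a Helly-style separation argument on each free stratum via Lemma \ref{lem6}, and reassemble by the countable concatenation property. Set $G:=\sum_{k=1}^{n}L^{0}(\mathcal{F},K)f_k\subseteq E^{*}$, which is a finitely generated $L^{0}(\mathcal{F},K)$-module. Theorem \ref{thm1} yields a partition $\{A_0,\ldots,A_n\}$ of $\Omega$ on which $\tilde{I}_{A_i}G$ is $\tilde{I}_{A_i}L^{0}(\mathcal{F},K)$-free of rank $i$ whenever $P(A_i)>0$. On $A_0$ all $\tilde{I}_{A_0}f_k$ vanish; feeding $\lambda_j=\tilde{I}_{A_0}$ (others $0$) into the hypothesis forces $\tilde{I}_{A_0}\xi_j=0$, so the zero vector solves (1)--(2) on $A_0$. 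On each $A_i$ with $i\ge 1$, I would further split into finitely many $\mathcal{F}$-measurable pieces on each of which a fixed $i$-element subset of $\{\tilde{I}_{A_i}f_1,\ldots,\tilde{I}_{A_i}f_n\}$ is an $L^{0}$-basis of the restricted module; the remaining $f_k$ then become $L^{0}$-combinations of this basis.

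On a typical such piece $A$, with chosen basis $\tilde{I}_{A}f_{j_1},\ldots,\tilde{I}_{A}f_{j_i}$, I would consider the $L^{0}$-affine set $C:=\{x\in\tilde{I}_{A}E:f_{j_\ell}(x)=\tilde{I}_{A}\xi_{j_\ell},\,\ell=1,\ldots,i\}$ and the $L^{0}$-convex set $B:=\{x\in\tilde{I}_{A}E:\|x\|<\tilde{I}_{A}(\beta+\varepsilon)\}$, and argue $C\cap B\neq\emptyset$ by contradiction. If $C\cap B=\emptyset$, Lemma \ref{lem6} furnishes a continuous $L^{0}$-linear functional $g\in(\tilde{I}_{A}E)^{*}$ strictly separating them. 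Being affine along $C$, $g$ vanishes on $\bigcap_{\ell}\ker f_{j_\ell}$, and the freeness supplied by Theorem \ref{thm1} forces $g=\sum_{\ell}\mu_\ell f_{j_\ell}$ for some $\mu_\ell\in\tilde{I}_{A}L^{0}(\mathcal{F},K)$; substituting into the separation inequality then contradicts the hypothesis $|\sum_\ell \mu_\ell \xi_{j_\ell}|\le\beta\|\sum_\ell \mu_\ell f_{j_\ell}\|^{*}$. Picking $x^{A}\in C\cap B$ on each piece and setting $x_\varepsilon:=\sum_{A}\tilde{I}_{A}x^{A}$ yields the required vector; the countable concatenation property of $E$ makes $x_\varepsilon$ a well-defined element of $E$, after which (1)--(2) read off globally.

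\textbf{Main obstacle.} The crux is the identification step in the separation argument: showing that the $L^{0}$-linear functional produced by Lemma \ref{lem6} must lie in the $L^{0}$-span of the chosen basis $\{f_{j_\ell}\}$. Theorem \ref{thm1} is precisely the tool for this --- freeness of the restricted module provides the $L^{0}$-algebraic control needed to identify the separating functional as a combination of the $f_k$ and thereby invoke the hypothesis. The countable concatenation property enters both in verifying the hypotheses of Lemma \ref{lem6} on each stratum and in the final reassembly, which is consistent with the counterexample in the paper showing that it cannot be dropped.
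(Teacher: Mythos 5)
Your necessity argument is fine, and your overall architecture for sufficiency (stratify the span of the $f_k$ via Theorem \ref{thm1}, separate on each stratum, reassemble) matches the paper's in outline. But there is a genuine gap at exactly the point you yourself flag as ``the crux,'' and the tool you invoke there is the wrong one. Two things are missing. First, before you can separate the affine set $C=\{x\in\tilde{I}_{A}E: f_{j_\ell}(x)=\tilde{I}_{A}\xi_{j_\ell}\}$ from the ball $B$, you must know $C\neq\emptyset$; Lemma \ref{lem6} requires both sets nonempty, and nonemptiness of $C$ is the solvability of the system without the norm bound --- this is not automatic and is in fact precisely what fails in the paper's counterexample (Remark 3.9), where $f(x)=\xi$ has no solution at all. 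Second, your identification of the separating functional $g$ as $\sum_\ell\mu_\ell f_{j_\ell}$ does not follow from ``freeness supplied by Theorem \ref{thm1}'': freeness of the span of the $f_k$ inside $E^{*}$ says nothing about which functionals vanish on $\bigcap_\ell\ker f_{j_\ell}$. What both missing steps actually require is the surjectivity of the evaluation map $T:E\to L^{0}(\mathcal{F},K^{i})$, $Tx=(f_{j_1}(x),\ldots,f_{j_i}(x))$. The paper proves this via Corollary \ref{cor2}: $T(E)$ is a submodule with the countable concatenation property (this is where the hypothesis on $E$ does its real work), hence by Theorem \ref{thm1}, Corollary \ref{cor1}, closedness of finitely generated submodules and the orthogonal decomposition theorem, a proper such submodule admits a nonzero orthogonal vector, contradicting $L^{0}$-independence. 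Once $T$ is onto, $C\neq\emptyset$ and $g$ factors through $T$, giving the desired representation. Note also that in your sketch the countable concatenation property of $E$ is never used in an essential way (your final reassembly is over finitely many pieces), which is a warning sign given that the hypothesis cannot be dropped.

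Two smaller points. Your claim that each $A_i$ can be further partitioned so that some $i$-element subset of $\{\tilde{I}_{A_i}f_k\}$ is a basis on each piece is plausible but unproven and unnecessary: the paper takes an arbitrary basis $\{g_j\}$ of $\tilde{I}_{A_i}S$ and transfers the data $\xi_k\mapsto\gamma_j$, avoiding this. And the paper separates not in $E$ but in $L^{0}(\mathcal{F},K^{n})$ itself, separating the point $p=(\xi_1,\ldots,\xi_n)$ from the image $T(\bar{B}_{\beta+\varepsilon})$; there the Riesz representation theorem identifies the separating functional as $\langle\cdot,y_0\rangle$ for free, so the identification step you struggle with disappears. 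That is the cleaner route, but your route would also work once surjectivity of $T$ is established.
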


The remainder of this paper is organized as follows: in Section 2 we give the proof of Theorem \ref{thm1} together with its two interesting corollaries and in Section 3 we give the proof of Theorem \ref{thm2}.

\section{The algebraic structure of finitely generated $L^{0}(\mathcal{F},K)$-modules}
\label{}

For the proof of Theorem \ref{thm1}, we need another formulation of completeness of the lattice $L^{0}(\mathcal{F},R)$. Let us first recall the notions of an essential supremum and infimum of a set of real-valued random variables from \cite{He-Wang-Yan}: let $\bar{\mathcal{L}}^{0}(\mathcal{F},R)$ be the set of extended real-valued random variables on $(\Omega,\mathcal{F},P)$ and $H$ a subset of $\bar{\mathcal{L}}^{0}(\mathcal{F},R)$, $\xi\in \bar{\mathcal{L}}^{0}(\mathcal{F},R)$ is called an essential upper bound for $H$ if $\eta(\omega)\leqslant \xi(\omega)$ $P$-a.s. for any $\eta \in H$, in addition if $\xi(\omega)\leqslant \xi^{'}(\omega)$ $P$-a.s. for each essential upper bound $\xi^{'}$ for $H$, then the essential upper bound $\xi$ is called an essential supremum for $H$. Similarly, one can have the notion of an essential infimum. It is well known that every subset $H$ of $\bar{\mathcal{L}}^{0}(\mathcal{F},R)$ has an essential supremum and infimum, denoted by esssup$H$ and essinf$H$, respectively, and they are unique in the sense of $P$-a.s. equality. Furthermore, if $H$ is also directed upwards (downwards) then there exists a nondecreasing (nonincreasing) sequence $\{a_{n},n\in N\}$ (resp., $\{b_{n},n\in N\}$) in $H$ such that esssup$H=$esssup$\{a_{n},n\in N\}$ (resp., essinf$H=$essinf$\{b_{n},n\in N\}$).

It is easy to see from above that for a nonempty subfamily $\mathcal{E}$ of $\mathcal{F}$ there uniquely exist $A$ and $B$ in $\mathcal{F}$ in the sense of P-a.s. equality such that $I_{A}=$esssup$\{I_{E}~|~E\in\mathcal{E}\}$ and $I_{B}=$essinf$\{I_{E}~|~E\in\mathcal{E}\}$, such an $A$ and $B$ are called an essential supremum and infimum of $\mathcal{E}$, denoted by esssup$\mathcal{E}$ and essinf$\mathcal{E}$, respectively.

In the sequel of this paper, we make the following convention: if $I_{A}$ denotes the characteristic function of an $\mathcal F$--measurable set $A$, then we always use $\tilde{I}_{A}$ for its equivalence class.

Besides, for any $\xi ,\eta\in \bar{L}^{0}(\mathcal{F},R)$, $[\xi >\eta]$ denotes the equivalence class of the $\mathcal F$-measurable set $\{\omega\in \Omega~|~\xi^{0}(\omega)>\eta^{0}(\omega)\}$ and $I_{[\xi >\eta]}$ the equivalence class of the characteristic function of $\{\omega\in \Omega~|~\xi^{0}(\omega)>\eta^{0}(\omega)\}$, where $\xi^{0}$ and $\eta^{0}$ are arbitrarily chosen representatives of $\xi$ and $\eta$, respectively. Further, for any $\xi \in L^{0}(\mathcal{F},K)$, $\xi^{-1}$ stands for the equivalence class of the $\mathcal{F}$-measurable function $(\xi^{0})^{-1}:\Omega\rightarrow K$ defined by
$$
(\xi^{0})^{-1}(\omega)=\left \{
\begin{array}{ll}
(\xi^{0}(\omega))^{-1}, &\mbox{if~}\xi^{0}(\omega)\neq 0;\\
0, &\mbox{otherwise},
\end{array}
\right.
$$ where $\xi^{0}$ is an arbitrarily chosen representative of $\xi$. It is clear that
and $\xi \cdot \xi^{-1}=I_{[|\xi|>0]}$.

Finally, for any $A\in \mathcal{F}$ we always use $\tilde{A }$ for the equivalence class of $A$, namely $\tilde{A }=\{B\in \mathcal{F}~|~P(A\bigtriangleup B)=0\}$, where $\bigtriangleup$ denotes the symmetric difference of $A$ and $B$. Further, let $\widetilde{\mathcal{F}}=\{\tilde{A}~|~A\in \mathcal{F}\}$, we make the following convention: $P(\tilde{A})=P(A), \forall A\in\mathcal{F}$, and define $\tilde{A}\setminus \tilde{B}=$ the equivalence class of $A\setminus B$ and $\tilde{A}\cup \tilde{B}=$ the equivalence class of $A\cup B$ for any $A$ and $B$ in $\mathcal{F}$.

The proof of Theorem \ref{thm1} needs Lemmas \ref{lem1}, \ref{lem2} and \ref{lem3} below.

\begin{lemma}\label{lem1}
Let $A$ be an $\mathcal{F}$-measurable subset of $\Omega$ such that $P(A)>0$, $m$ and $h$ positive integers and $\xi_{ij}\in \tilde{I}_{A}L^{0}(\mathcal{F},K)$ for $1\leqslant i\leqslant m$ and $1\leqslant j\leqslant h$. If $h>m$, then the following system of linear equations
 \begin{equation}\left\{
\begin{array}{ccccccccc}
\xi_{11}\lambda_{1}&+&\xi_{12}\lambda_{2}&+&\cdots&+&\xi_{1h}\lambda_{h}&=&0\\
\xi_{21}\lambda_{1}&+&\xi_{22}\lambda_{2}&+&\cdots&+&\xi_{2h}\lambda_{h}&=&0\\
\multicolumn{9}{c}{\dotfill}\\
\xi_{m1}\lambda_{1}&+&\xi_{m2}\lambda_{2}&+&\cdots&+&\xi_{mh}\lambda_{h}&=&0\\
\end{array}
\right
.\end{equation}
has a nontrivial solution $(\lambda_{1},\lambda_{2},\cdots,\lambda_{h})$ in $\tilde{I}_{A}L^{0}(\mathcal{F},K^{h})$, where $L^{0}(\mathcal{F},K^{h})$ is the set of equivalence classes of $K^{h}$-valued random variables on $(\Omega,\mathcal{F},P)$.

\end{lemma}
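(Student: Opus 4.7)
The plan is to proceed by induction on the number of equations $m$, with $h>m$. The central difficulty is that $L^{0}(\mathcal{F},K)$ is not a field: any candidate pivot $\xi_{1j}$ may vanish on a subset of positive probability, so we cannot invert it globally and classical Gaussian elimination breaks down. The remedy is to localize: partition $A$ measurably into pieces on which a chosen coefficient is invertible in the corresponding stratification of $L^{0}$, recurse there, and then glue the local solutions through a finite concatenation.

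Base case $m=1$, $h\geq 2$: given $\sum_{j=1}^{h}\xi_{1j}\lambda_{j}=0$, decompose $A$ into the pairwise disjoint $\mathcal{F}$-measurable pieces $A_{0}=A\cap\bigcap_{j=1}^{h}[\xi_{1j}=0]$ and $C_{j}=A\cap[\xi_{1j}\neq 0]\cap\bigcap_{k<j}[\xi_{1k}=0]$ for $1\leq j\leq h$; clearly $A=A_{0}\cup\bigcup_{j}C_{j}$. On $A_{0}$ the equation is trivial and we take $\lambda_{1}=\tilde{I}_{A_{0}}$, all other coordinates $0$. On $C_{j}$ with $j<h$ the element $\xi_{1j}$ is invertible in $\tilde{I}_{C_{j}}L^{0}(\mathcal{F},K)$, so we set $\lambda_{j+1}=\tilde{I}_{C_{j}}$, $\lambda_{j}=-\xi_{1j}^{-1}\xi_{1,j+1}\tilde{I}_{C_{j}}$, and the remaining coordinates $0$; the equation is then satisfied on $C_{j}$. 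On $C_{h}$ all earlier coefficients vanish, so we put $\lambda_{1}=\tilde{I}_{C_{h}}$, the rest $0$. Since $P(A)>0$ at least one of the pieces has positive probability, so the disjoint sum of these local solutions is a nontrivial element of $\tilde{I}_{A}L^{0}(\mathcal{F},K^{h})$.

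Inductive step, assuming the lemma for $m-1$ equations in more than $m-1$ unknowns: apply the same partitioning scheme $\{A_{0},C_{1},\dots,C_{h}\}$ to the first row. On $A_{0}$ equation $(1)$ is void, and equations $(2),\dots,(m)$ form an $(m-1)\times h$ system over $\tilde{I}_{A_{0}}L^{0}(\mathcal{F},K)$ with $h>m-1$; the inductive hypothesis supplies a nontrivial $\lambda^{(0)}\in\tilde{I}_{A_{0}}L^{0}(\mathcal{F},K^{h})$. On each $C_{j}$ of positive probability, $\xi_{1j}$ is invertible over $\tilde{I}_{C_{j}}L^{0}(\mathcal{F},K)$, so equation $(1)$ can be solved for $\lambda_{j}$ as an $\tilde{I}_{C_{j}}L^{0}$-linear expression in the remaining $h-1$ unknowns. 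Substituting this expression into equations $(2),\dots,(m)$ yields an $(m-1)\times(h-1)$ system over $\tilde{I}_{C_{j}}L^{0}(\mathcal{F},K)$ with $h-1>m-1$, and the inductive hypothesis again furnishes a nontrivial solution $(\mu_{k})_{k\neq j}$; defining $\lambda_{j}^{(j)}$ by the explicit formula produces a nontrivial $\lambda^{(j)}\in \tilde{I}_{C_{j}}L^{0}(\mathcal{F},K^{h})$ satisfying all $m$ equations on $C_{j}$. Assemble
\[
\lambda \;=\; \tilde{I}_{A_{0}}\lambda^{(0)} \;+\; \sum_{j=1}^{h}\tilde{I}_{C_{j}}\lambda^{(j)},
\]
which lies in $\tilde{I}_{A}L^{0}(\mathcal{F},K^{h})$, satisfies the full system and is nontrivial.

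The only delicate point is the one highlighted above: maintaining enough invertibility of scalars to perform elimination. Once one realizes that this obstruction is entirely local in $\omega$ and can be dissolved by the finite $\mathcal{F}$-measurable partition $\{A_{0},C_{1},\dots,C_{h}\}$, the proof is essentially classical linear algebra applied stratum by stratum, and the verification that the $C_{j}$ together with $A_{0}$ exhaust $A$ disjointly follows directly from their definitions.
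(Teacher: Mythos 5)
Your proof is correct. The one point that needed care --- that a ``pivot'' in $L^{0}(\mathcal{F},K)$ need only be invertible on a measurable stratum, so elimination must be localized --- is handled properly: the sets $A_{0},C_{1},\dots,C_{h}$ do partition $A$, the substitution on each $C_{j}$ of positive probability genuinely reduces to an $(m-1)\times(h-1)$ system over $\tilde{I}_{C_{j}}L^{0}(\mathcal{F},K)$ to which the inductive hypothesis applies (since $h-1>m-1$), nontriviality of the glued solution follows because at least one stratum has positive probability and carries a nontrivial local solution, and the finite concatenation poses no well-definedness issue. Your route differs from the paper's in organization: the paper pivots on the first \emph{column}, either terminating early with $\lambda_{1}$ equal to the indicator of a positive-measure set where that column vanishes, or else assembling a single global pivot row $\eta_{1i}$ (a measurable combination of the original rows whose leading entry is $\tilde{I}_{\Omega}$) and carrying out one global elimination down to an echelon form, from which a solution is read off by setting $\lambda_{m+1}=-\tilde{I}_{\Omega}$. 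You instead pivot on the first \emph{row}, split the base set, recurse independently on each stratum, and glue. The paper's version keeps a single system throughout and produces the solution in one formula at the end; yours multiplies the number of strata at each level of the recursion (still finite) but lets you invoke an induction hypothesis cleanly and avoids having to argue that the row operations preserve the solution set. Both are valid implementations of stratified Gaussian elimination.
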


\begin{proof}
With no loss of generality, suppose $A=\Omega$. If $P(\tilde{\Omega}\setminus\bigcup_{j=1}^{m}[|\xi_{j1}|>0])>0$, let $\lambda_{1}=I_{\tilde{\Omega}\setminus\bigcup_{j=1}^{m}[|\xi_{j1}|>0]}$ and $\lambda_{i}=0$ for $2\leqslant i \leqslant h$, then it is easy to verify that $(\lambda_{1},\lambda_{2},\cdots,\lambda_{h})$ is a desired solution. Otherwise, $P(\tilde{\Omega}\setminus\bigcup_{j=1}^{m}[|\xi_{1j}|>0])=0$, let $C_{l}=[|\xi_{l1}|>0]\setminus\bigcup_{j=1}^{l-1}[|\xi_{j1}|>0]$ for $2\leqslant l \leqslant m$ and $\eta_{1i}=(I_{\tilde{\Omega}\setminus[|\xi_{11}|>0]}+\xi_{11}^{-1})\xi_{1i}+\sum_{l=2}^{m}I_{C_{l}}\xi_{l1}^{-1}\xi_{li}$ for $1\leqslant i \leqslant h$. In addition, let $\eta_{ji}=\xi_{ji}-\xi_{j1}^{-1}\eta_{1i}$ for $1\leqslant i \leqslant h$ and $2\leqslant j \leqslant m$. Clearly $\eta_{11}=I_{\tilde{\Omega}}$ and $\eta_{j1}=0$ for $2\leqslant j \leqslant m$. It is easy to check that the following system of linear equations
$$
\left\{
\begin{array}{ccccccccc}
\lambda_{1}&+&\eta_{12}\lambda_{2}&+&\cdots&+&\eta_{1h}\lambda_{h}&=&0\\
                    & &\eta_{22}\lambda_{2}&+&\cdots&+&\eta_{2h}\lambda_{h}&=&0\\
        \multicolumn{9}{c}{\dotfill}\\
                    & &\eta_{m2}\lambda_{2}&+&\cdots&+&\eta_{mh}\lambda_{h}&=&0\\
\end{array} \right
.$$
is equivalent to (2.1).

By induction method, we can eventually obtain either a desired solution or the following system of linear equations
$$
\left\{
\begin{array}{cccccccccccccc}
\lambda_{1}& & & & &+&\beta_{1,m+1}\lambda_{m+1}&+&\cdots& +&\beta_{1h}\lambda_{h}&=&0\\
                    & &\lambda_{2}& & &+&\beta_{2,m+1}\lambda_{m+1}&+&\cdots& +&\beta_{2h}\lambda_{h}&=&0\\
\multicolumn{9}{c}{\dotfill}\\
                    & & & &\lambda_{m}&+&\beta_{m,m+1}\lambda_{m+1}&+&\cdots&+&\beta_{mh}\lambda_{h}&=&0\\
\end{array} \right
.$$
which is still equivalent to (2.1). Let $\lambda_{i}=\beta_{i,m+1}$ for $1\leqslant i \leqslant m$, $\lambda_{m+1}=-I_{\tilde{\Omega}}$ and $\lambda_{j}=0$ for $m+2\leqslant j \leqslant h$, then such $(\lambda_{1},\lambda_{2},\cdots,\lambda_{h})$ satisfies our requirement.
\end{proof}

\begin{lemma}\label{lem2}
Let $E$ be an $L^{0}(\mathcal{F},K)$-module and $\mathcal{A}_{i}=\{A\in \mathcal{F}~|~ \tilde{I}_{A}E$ is a free module of rank $i$ over the algebra $\tilde{I}_{A}L^{0}(\mathcal{F},K)\}$ for some nonnegative integer $i$. If $\mathcal{A}_{i}\neq\emptyset$, then esssup$\mathcal{A}_{i}\in \mathcal{A}_{i}$.
\end{lemma}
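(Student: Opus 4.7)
The plan is to verify first that $\mathcal{A}_i$ is directed upwards (closed under finite unions), then invoke the standard fact from the preamble that a directed family's essential supremum is realized by a monotone sequence, and finally extend a coherent sequence of local bases to a basis on the limiting set.

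For the finite-union step, suppose $A,B\in\mathcal{A}_i$ with free bases $\{a_1,\ldots,a_i\}$ of $\tilde{I}_AE$ and $\{b_1,\ldots,b_i\}$ of $\tilde{I}_BE$ (normalized so that $\tilde{I}_Aa_k=a_k$ and $\tilde{I}_Bb_k=b_k$). Using the disjoint decomposition $A\cup B=A\sqcup(B\setminus A)$, I would set $c_k:=\tilde{I}_Aa_k+\tilde{I}_{B\setminus A}b_k$. Spanning follows by writing any $y\in\tilde{I}_{A\cup B}E$ as $\tilde{I}_Ay+\tilde{I}_{B\setminus A}y$, expanding each summand in the appropriate basis, and recombining; freeness of $\{c_1,\ldots,c_i\}$ over $\tilde{I}_{A\cup B}L^0(\mathcal{F},K)$ follows by multiplying any relation by $\tilde{I}_A$ and by $\tilde{I}_{B\setminus A}$ separately and appealing to freeness on each piece. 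Hence $A\cup B\in\mathcal{A}_i$.

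With finite-union closure in hand, the family $\{\tilde{I}_A : A\in\mathcal{A}_i\}$ is directed upwards, so the preamble yields an increasing sequence $\{A_n\}\subset\mathcal{A}_i$ with $A_n\uparrow A:=\operatorname{esssup}\mathcal{A}_i$ almost surely. Applying the extension construction inductively, I would produce coherent local bases $\{a_k^{(n)}\}_{k=1}^{i}$ for $\tilde{I}_{A_n}E$ satisfying $\tilde{I}_{A_{n-1}}a_k^{(n)}=a_k^{(n-1)}$. The candidate global basis for $\tilde{I}_AE$ would then be the countable concatenation
\[
a_k=\sum_{n\geqslant 1}\tilde{I}_{A_n\setminus A_{n-1}}\,a_k^{(n)}\qquad(A_0:=\emptyset),
\]
and its spanning and freeness over $\tilde{I}_AL^0(\mathcal{F},K)$ should reduce stratum by stratum to those of the $a_k^{(n)}$, patching coefficients across strata via the countable completeness of $L^0(\mathcal{F},K)$ recorded in the preamble.

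The main obstacle lies in this last step: the formal sum defining $a_k$ must be realized as an actual element of $E$, which is not automatic for an arbitrary $L^0(\mathcal{F},K)$-module. In the intended application (the proof of Theorem~\ref{thm1}, where $E$ is finitely generated), this can be side-stepped by pushing the concatenation inside the coefficient algebra---writing each $a_k^{(n)}$ as an $L^0(\mathcal{F},K)$-combination of a fixed finite generating set of $E$ and then concatenating the coefficients in $L^0(\mathcal{F},K)$, which is already countably complete. Alternatively, one can avoid constructing the $a_k$ inside $E$ altogether by building an $\tilde{I}_AL^0(\mathcal{F},K)$-module isomorphism $\tilde{I}_AE\cong(\tilde{I}_AL^0(\mathcal{F},K))^{i}$ directly from the coherent coefficient data, which is the natural way to bypass the concatenation issue at the abstract level.
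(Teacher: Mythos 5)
Your proposal follows essentially the same route as the paper's proof: directedness of $\mathcal{A}_i$ via the union construction $c_k=\tilde{I}_Aa_k+\tilde{I}_{B\setminus A}b_k$, a nondecreasing sequence realizing the essential supremum, and a countable concatenation of local bases over the disjoint strata. The ``obstacle'' you flag at the end is resolved in the paper exactly as in your first workaround: the lemma is only ever applied to finitely generated $E$, and the paper observes that a finitely generated module has the countable concatenation property (which one proves precisely by concatenating the coefficients in $L^{0}(\mathcal{F},K)$ with respect to a fixed finite generating set, as you describe), so the formal sum $y_j=\sum_{k}\tilde{I}_{C_k}y_j^{k}$ does define an element of $E$.
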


\begin{proof}
Suppose $\mathcal{A}_{i}\neq\emptyset$ for some $1\leqslant i \leqslant n$, $A,B\in \mathcal{A}_{i}$ and $\{y_{1},y_{2},\cdots,y_{i}\},\{y_{1}^{'}$, $y_{2}^{'},\cdots,y_{i}^{'}\}$ are bases for the $\tilde{I}_{A}L^{0}(\mathcal{F},K)$-module $\tilde{I}_{A}E$ and the $\tilde{I}_{B}L^{0}(\mathcal{F},K)$-module $\tilde{I}_{B}E$, respectively, then $\{\tilde{I}_{A}y_{1}+\tilde{I}_{A\setminus B}y_{1}^{'},\tilde{I}_{A}y_{2}+\tilde{I}_{A\setminus B}y_{2}^{'},\cdots,\tilde{I}_{A}y_{i}+\tilde{I}_{A\setminus B}y_{i}^{'}\}$ is a basis for the $\tilde{I}_{A\cup B}L^{0}(\mathcal{F},K)$-module $\tilde{I}_{A\cup B}E$, i.e. $A\cup B\in \mathcal{A}_{i}$. Actually, let $z_{j}=\tilde{I}_{A}y_{j}+\tilde{I}_{A\setminus B}y_{j}^{'}$ for $1 \leqslant j\leqslant i$. If $\sum_{j=1}^{i}\lambda_{j}z_{j}=\theta$ for some $\lambda_{j}\in \tilde{I}_{A\cup B}L^{0}(\mathcal{F},K), 1 \leqslant j\leqslant i$, then $\sum_{j=1}^{i}\tilde{I}_{A}\lambda_{j}y_{j}=\tilde{I}_{A}\sum_{j=1}^{i}\lambda_{j}z_{j}=\theta$, which implies $\tilde{I}_{A}\lambda_{j}=0$ for $1 \leqslant j\leqslant i$. Likewise, we have $\tilde{I}_{B}\lambda_{j}=0$ for $1 \leqslant j\leqslant i$, and it follows that $\lambda_{j}=0$ for $1 \leqslant j\leqslant i$. Thus $\{z_{1},z_{2},\cdots,z_{i}\}$ is $\tilde{I}_{A\cup B}L^{0}(\mathcal{F},K)$-independent. Furthermore, for any $x\in \tilde{I}_{A\cup B}E$ there exist $\{\xi_{j}\}_{j=1}^{i}\subset \tilde{I}_{A}L^{0}(\mathcal{F},K)$ and $\{\eta_{j}\}_{j=1}^{i}\subset \tilde{I}_{B}L^{0}(\mathcal{F},K)$ such that $\tilde{I}_{A}x=\sum_{j=1}^{i}\xi_{j}y_{j}$ and $\tilde{I}_{ B}x=\sum_{j=1}^{i}\eta_{j}y^{'}_{j}$. Hence
$$x=\tilde{I}_{A}x+\tilde{I}_{A\setminus B}x=\sum_{j=1}^{i}\xi_{j}y_{j}+\tilde{I}_{A\setminus B}\sum_{j=1}^{i}\eta_{j}y^{'}_{j}=\sum_{j=1}^{i}(\xi_{j}+\tilde{I}_{A\setminus B}\eta_{j})z_{j},$$ which proves our claim.

Thus $\mathcal{A}_{i}$ is directed upwards. Let $A_{i}=$esssup$\mathcal{A}_{i}$, then there exists a nondecreasing sequence $\{B_{k},k\in N\}$ in $\mathcal {A}_{i}$
such that $A_{i}=\bigcup _{k\in N} B_{k}$. Let $\{C_{k},k\in N\}$ be a sequence of $\mathcal{F}$-measurable sets such that $C_{1}=B_{1}$, $C_{k}=B_{k}\setminus B_{k-1}$ for $k>1$ and $N^{'}:=\{k\in N~|~P(C_{k})>0\}$. Clearly $C_{k}\in \mathcal{A}_{i}$ for each $k \in N^{'}$, thus there exists $\{y_{j}^{k}~|~1\leqslant j\leqslant i\}\subset \tilde{I}_{C_{k}}E$ for each
$k\in N^{'}$ such that $\{y_{j}^{k}~|~1\leqslant j\leqslant i\}$ is a basis for $\tilde{I}_{C_{k}}E$. Since $E$ is finitely generated, $E$ has the countable concatenation property, then $y_{j}:=\sum_{k\in N^{'}}\tilde{I}_{C_{k}}y_{j}^{k}$ belongs to $E$ for $1\leqslant j\leqslant i$. It is easy to check that $\{y_{j}\}_{j=1}^{i}$ is a basis for $\tilde{I}_{A_{i}}E$, which implies $A_{i}\in \mathcal{A}_{i}$. Finally, let $\mathcal{A}_{0}=\{A\in \mathcal{F}~|~\tilde{I}_{A}E=\{\theta\}\}$ and $A_{0}=$esssup$\mathcal{A}_{0}$, then it is easy to verify that  $A_{0}\in \mathcal{A}_{0}$, which completes the proof.
\end{proof}

For the sake of the reader's convenience, we give the notion of the countable concatenation hull of a subset of an $L^{0}(\mathcal{F},K)$-module, which was first introduced in \cite{rela}.

\begin{definition}[\cite{rela}]
Let $E$ be an $L^{0}(\mathcal{F},K)$-module and $G$ a subset of $E$. The set of countable concatenations $\Sigma_{n\in N}\tilde{I}_{A_{n}}x_{n}$ with $x_{n}\in G$ for each $n\in N$ is called the countable concatenation hull of $G$, denoted by $H_{cc}(G)$.
\end{definition}

Clearly, we have $H_{cc}(G)\supset G$ for any subset $G$ of an $L^{0}(\mathcal{F},K)$-module $E$, and $G$ has the countable concatenation property if and only if $H_{cc}(G)=G$. For the proof of Theorem \ref{thm1}, we still need the following:

\begin{lemma}[{\cite[Theorem 3.13]{rela}}]\label{lem3}
Let $E$ be an $L^{0}(\mathcal{F},K)$-module and $G$ and $M$ any two nonempty subsets of $E$ such that $\tilde{I}_{A}G+\tilde{I}_{A^{c}}G\subset G$ and $\tilde{I}_{A}M+\tilde{I}_{A^{c}}M\subset M$ for any $A\in\mathcal{F}$. If $H_{cc}(G)\cap H_{cc}(M)=\emptyset$, then there exists a set $H(G,M)\in \mathcal{F}$, which is unique in the sense of equivalence, called the hereditarily disjoint stratification of $G$ and $M$, such that the following are satisfied:

\begin{enumerate}
\item $P(H(G,M))>0$;

\item $\tilde{I}_{A}G\cap \tilde{I}_{A}M=\emptyset$ for all $A\in \mathcal{F}, A\subset H(G,M)$ with $P(A)>0$;

\item $\tilde{I}_{A}G\cap \tilde{I}_{A}M\neq\emptyset$ for all $A\in \mathcal{F}, A\subset \Omega\setminus H(G,M)$ with $P(A)>0$.
\end{enumerate}
\end{lemma}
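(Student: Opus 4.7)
My plan is to build $H(G,M)$ as the essential supremum of the hereditarily disjoint family in $\widetilde{\mathcal{F}}$, and then extract both stratification properties from its maximality. Define
\[
\mathcal{J}_{0} = \{A \in \mathcal{F} : \tilde{I}_{B}G \cap \tilde{I}_{B}M = \emptyset \text{ for every } B \in \mathcal{F} \text{ with } B \subset A \text{ and } P(B) > 0\}.
\]
This family is obviously closed under subsets, and a short argument shows it is closed under countable unions: if $\{A_{n}\} \subset \mathcal{J}_{0}$ and $B \subset \bigcup_{n} A_{n}$ has $P(B) > 0$, then any $x \in G$, $y \in M$ witnessing $\tilde{I}_{B}x = \tilde{I}_{B}y$ would restrict to some $B \cap A_{n}$ of positive measure, violating $A_{n} \in \mathcal{J}_{0}$. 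Consequently esssup$\,\mathcal{J}_{0} \in \mathcal{J}_{0}$, and I would set $H(G,M) :=$ esssup$\,\mathcal{J}_{0}$, which yields condition (2) at once.

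To obtain $P(H(G,M)) > 0$ I argue by contradiction. If $H(G,M) = \emptyset$ a.s., then no positive-measure $A \in \mathcal{F}$ lies in $\mathcal{J}_{0}$, so each such $A$ contains some $B$ of positive measure with $\tilde{I}_{B}G \cap \tilde{I}_{B}M \neq \emptyset$. A standard exhaustion in $\widetilde{\mathcal{F}}$ then produces a countable disjoint family $\{D_{n}\}$ covering $\Omega$ up to null, together with witnesses $x_{n} \in G$, $y_{n} \in M$ satisfying $\tilde{I}_{D_{n}}x_{n} = \tilde{I}_{D_{n}}y_{n}$. (Here the binary concatenation closure $\tilde{I}_{A}G + \tilde{I}_{A^{c}}G \subset G$, and its analogue for $M$, are used to show that the intersecting family $\mathcal{I} = \{A : \tilde{I}_{A}G \cap \tilde{I}_{A}M \neq \emptyset\}$ is closed under finite unions, so the exhaustion may be chosen nondecreasing.) Assembling $\tilde{x} := \sum_{n} \tilde{I}_{D_{n}}x_{n} \in H_{cc}(G)$ and $\tilde{y} := \sum_{n} \tilde{I}_{D_{n}}y_{n} \in H_{cc}(M)$ produces $\tilde{x} = \tilde{y} \in H_{cc}(G) \cap H_{cc}(M)$, contradicting the hypothesis and giving (1).

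For condition (3), fix $A \subset \Omega \setminus H(G,M)$ with $P(A) > 0$ and suppose for contradiction that $\tilde{I}_{A}G \cap \tilde{I}_{A}M = \emptyset$. Set $A' := $ esssup$\{B \in \mathcal{F} : B \subset A,\ B \in \mathcal{I}\}$. If $P(A \setminus A') > 0$, then no positive-measure subset of $A \setminus A'$ can belong to $\mathcal{I}$ (any such would enlarge $A'$), so $A \setminus A' \in \mathcal{J}_{0}$, whence $(A \setminus A') \cup H(G,M) \in \mathcal{J}_{0}$ strictly dominates $H(G,M)$, contradicting its maximality. Otherwise $A = A'$ a.s., and an increasing exhausting sequence $\{B_{n}\}$ in $\mathcal{I}|_{A}$ with $\bigcup_{n} B_{n} = A$ together with witnesses $(x_{n}, y_{n})$ patches, as in the previous paragraph, into $\tilde{x} \in H_{cc}(G)$, $\tilde{y} \in H_{cc}(M)$ agreeing on $A$. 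Uniqueness of $H(G,M)$ modulo null sets then follows from (2) and (3) by symmetry: the symmetric difference of two candidates must be simultaneously hereditarily disjoint and intersecting on every positive sub-piece, hence null.

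The main anticipated obstacle is the second sub-case of (3): the patched witnesses $(\tilde{x}, \tilde{y})$ naturally live in $H_{cc}(G) \times H_{cc}(M)$, whereas the stated conclusion demands a pair in $G \times M$. Collapsing the countable concatenation back to a single pair using only the binary concatenation axiom $\tilde{I}_{A}G + \tilde{I}_{A^{c}}G \subset G$ is the technical heart of the argument; it is precisely where I would either re-derive a careful refinement/selection lemma tailored to the interplay between $H_{cc}$ and the binary axiom, or invoke [rela, Theorem 3.13] directly.
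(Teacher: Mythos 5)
First, a point of reference: the paper does not prove this lemma at all --- it is imported verbatim from \cite[Theorem 3.13]{rela} --- so there is no internal proof to measure your argument against. That said, your construction of $H(G,M)$ as $\mathrm{esssup}\,\mathcal{J}_{0}$, the verification that $\mathcal{J}_{0}$ is closed under countable unions (so that the essential supremum is attained and condition (2) follows), and the proof of (1) by exhausting $\Omega$ with a nondecreasing sequence from the finite-union-closed family $\mathcal{I}$ and patching the witnesses into a common element of $H_{cc}(G)\cap H_{cc}(M)$ are all correct; for (1) and (2) your argument is complete, and the uniqueness argument is fine once (2) and (3) are in hand.

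The obstacle you flag at condition (3) is, however, not a removable technicality: with $G$ and $M$ assumed closed only under binary stratification, condition (3) \emph{as literally stated} is false, so no refinement of the collapsing step can close the gap. Take $\Omega=[0,1]$ with Lebesgue measure, $E=L^{0}(\mathcal{F},R)$, $M=\{0\}$, and let $G$ consist of all $x\in E$ whose vanishing set is, up to a null set, contained in $(\epsilon,1/2]$ for some $\epsilon>0$ depending on $x$. Then $G$ is closed under binary stratification (the vanishing set of $\tilde{I}_{A}x_{1}+\tilde{I}_{A^{c}}x_{2}$ lies in the union of those of $x_{1}$ and $x_{2}$), and $H_{cc}(G)\cap H_{cc}(M)=\emptyset$, since a concatenation of members of $G$ vanishing on a partition of $\Omega$ would force $\Omega\subset(0,1/2]$ a.s. Yet any $H$ satisfying (2) must be a.s. disjoint from $(0,1/2]$, because for each $\epsilon>0$ the set $H\cap(\epsilon,1/2]$, if of positive measure, is exactly the vanishing set of the member $\tilde{I}_{\Omega\setminus(H\cap(\epsilon,1/2])}$ of $G$; but then $A=[0,1/2]\subset\Omega\setminus H$ has positive measure while no single $x\in G$ vanishes on all of $A$ --- only an element of $H_{cc}(G)$ does. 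This is precisely the obstruction you identified: the patched witnesses live in $H_{cc}(G)\times H_{cc}(M)$, and that is the most that can be proved. The usable form of the cited result states (3) with $\tilde{I}_{A}H_{cc}(G)\cap\tilde{I}_{A}H_{cc}(M)\neq\emptyset$ (equivalently, one assumes $G$ and $M$ have the countable concatenation property, as they do in every application made in this paper, where the sets involved are singletons or finitely generated submodules). Falling back on ``invoke \cite[Theorem 3.13]{rela} directly'' is of course circular, since that is the statement under proof; the honest fix is to prove (3) for the hulls, which your exhaustion argument already accomplishes.
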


We can now prove Theorem 1.1.

\newproof{pot}{Proof of Theorem \ref{thm1}}
\begin{pot}

Suppose $E$ is generated by $\{x_{1},x_{2},\cdots,x_{n}\}\subset E$ and $\mathcal{A}_{i}$ is defined as in Lemma \ref{lem2} for $0\leqslant i\leqslant n$. Let $A_{i}=$esssup$\mathcal{A}_{i}$ for each $i$ such that $0\leqslant i \leqslant n$ and $\mathcal{A}_{i}\neq \emptyset$, then it follows from Lemma \ref{lem2} that $\tilde{I}_{A_{i}}E$ is a free module of rank $i$ over the algebra $\tilde{I}_{A_{i}}L^{0}(\mathcal{F},K)$. Besides, let $A_{i}=\emptyset$ if $0\leqslant i \leqslant n$ and $\mathcal{A}_{i}= \emptyset$.

Let $B=\Omega \setminus \bigcup_{0\leqslant k \leqslant n}A_{k}$, then we claim that $P(B)=0$. Otherwise, there exists $y_{1}\in \tilde{I}_{B}E$ such that $y_{1}\neq \theta$ since $B\notin \mathcal{A}_{0}$. By Lemma \ref{lem3}, $P(H(\{y_{1}\},\{\theta\}))>0$, where $H(\{y_{1}\},\{\theta\})$ denotes the hereditarily disjoint stratification of $\{y_{1}\}$ and $\{\theta\}$. Let $B_{1}=H(\{y_{1}\},\{\theta\})$, if $\beta y_{1}=\theta$ for some $\beta \in\tilde{I}_{B_{1}}L^{0}(\mathcal{F},K)$ then $I_{[|\beta|>0]} y_{1}=\beta^{-1}\beta y_{1}=\theta$. By the choice of $B_{1}$ it follows that $I_{[|\beta|>0]}=0$, which implies $\beta=0$. Thus $\{\tilde{I}_{B_{1}}y_{1}\}$ is $\tilde{I}_{B_{1}}L^{0}(\mathcal{F},K)$-independent.

Suppose for some $k\in N$ such that $1\leqslant k \leqslant n$ there exist an $\mathcal{F}$-measurable subset $B_{k}$ of $B$ and $\{y_{1},y_{2},\cdots,y_{k}\}\subset E$ such that $P(B_{k})>0$ and $\{\tilde{I}_{B_{k}}y_{i}~|~1\leqslant i\leqslant k\}$ is $\tilde{I}_{B_{k}}L^{0}(\mathcal{F},K)$-independent, further let $M:=\{\sum_{i=1}^{k}\xi_{i}y_{i}~|~\xi_{i} \in \tilde{I}_{B_{k}}L^{0}(\mathcal{F},K), 1\leqslant i\leqslant k\}$. Since $B_{k}\notin \mathcal{A}_{k}$, there exists $y_{k+1}\in (\tilde{I}_{B_{k}}E)\setminus M$; further let $B_{k+1}=H(M,\{y_{k+1}\})$, then it is also clear that $B_{k+1}\subset B_{k}$ and $P(B_{k+1})>0$. If $\sum_{i=1}^{k+1}\beta_{i}y_{i}=\theta$ for some $\beta_{i} \in\tilde{I}_{B_{k+1}}L^{0}(\mathcal{F},K)$, then
$I_{[|\beta_{k+1}|>0]}y_{k+1}=\sum_{i=1}^{k}(-\beta_{k+1}^{-1}\beta_{i})y_{i}$. By the choice of $B_{k+1}$ it follows that $I_{[|\beta_{k+1}|>0]}=0$, which implies $\beta_{k+1}=0$, and hence also $\beta_{i}=0$ for $1\leqslant i\leqslant k$ since $\{\tilde{I}_{B_{k}}y_{i}~|~1\leqslant i\leqslant k\}$ is $\tilde{I}_{B_{k}}L^{0}(\mathcal{F},K)$-independent, so that $\{\tilde{I}_{B_{k+1}}y_{i}~|~1\leqslant i\leqslant k+1\}$ is $\tilde{I}_{B_{k+1}}L^{0}(\mathcal{F},K)$-independent.

Consequently, by induction we can obtain an $\mathcal{F}$-measurable set $B_{n+1}$ and $\{y_{1},y_{2},\cdots,y_{n+1}\}\subset E$ such that $P(B_{n+1})>0$ and $\{\tilde{I}_{B_{n+1}}y_{i}~|~1\leqslant i\leqslant n+1\}$ is $\tilde{I}_{B_{n+1}}L^{0}(\mathcal{F},K)$-independent, but this is impossible. Otherwise, suppose $\tilde{I}_{B_{n+1}}y_{i}=\sum_{j=1}^{n}\xi_{ji}x_{j}$ for some $\xi_{ji}\in \tilde{I}_{B_{n+1}}L^{0}(\mathcal{F},K)$, where $1\leqslant j\leqslant n$ and $1\leqslant i\leqslant n+1$. Now consider the system of linear equations
\begin{equation}\left\{
\begin{array}{ccccccccc}
\xi_{11}\lambda_{1}&+&\xi_{12}\lambda_{2}&+&\cdots&+&\xi_{1,n+1}\lambda_{n+1}&=&0\\
\xi_{21}\lambda_{1}&+&\xi_{22}\lambda_{2}&+&\cdots&+&\xi_{2,n+1}\lambda_{n+1}&=&0\\
\multicolumn{9}{c}{\dotfill}\\
\xi_{n1}\lambda_{1}&+&\xi_{n2}\lambda_{2}&+&\cdots&+&\xi_{n,n+1}\lambda_{n+1}&=&0.\\
\end{array}
\right
.\end{equation}
By Lemma \ref{lem1} there exists a nontrivial solution $(\lambda_{1},\lambda_{2},\cdots,\lambda_{n+1})\in \tilde{I}_{B_{n+1}}L^{0}(\mathcal{F},K^{n+1})$ satisfying (2.2). It is easy to check that $\sum_{i=1}^{n+1}\lambda_{i}\tilde{I}_{B_{n+1}}y_{i}=\theta$, which is a contradiction. Thus $P(\Omega \setminus \bigcup_{0\leqslant k \leqslant n}A_{k})=0$.

The desired partition can be obtained easily once we can prove that $P(A_{i}\cap A_{j})=0$
when $0\leqslant i,j \leqslant n$ and $i\neq j$. Suppose $P(A_{i}\cap A_{j})>0$ for some $i,j$ such that $0\leqslant i,j \leqslant n$, then it is easy to verify
that $\tilde{I}_{A_{i}\cap A_{j}}E$ is a free module of both rank $i$ and $j$ over the algebra
$\tilde{I}_{A_{i}\cap A_{j}}L^{0}(\mathcal{F},K)$. Since $\tilde{I}_{A_{i}\cap A_{j}}L^{0}(\mathcal{F},K)$
is a commutative ring with identity $\tilde{I}_{A_{i}\cap A_{j}}$, it follows from \cite[Chapter 4, Corollary 2.12]{alge} that
$\tilde{I}_{A_{i}\cap A_{j}}E$ has the invariant dimension property, which implies $i=j$ and at the same time also proves the uniqueness of each $A_{i}$.
\end{pot}

In the later part of this section, we want to give two interesting results of finitely generated $L^{0}(\mathcal{F},K)$-modules, which are closely related to the countable concatenation property. First, let us introduce a special finitely generated $L^{0}(\mathcal{F},K)$-module. Denote by $L^{0}(\mathcal{F},K^{n})$ the linear space of equivalence classes of $K^{n}$--valued random variables on $(\Omega,\mathcal{F},P)$, where $n$ is some positive integer, define the module multiplication
$\cdot:L^{0}(\mathcal{F},K)\times L^{0}(\mathcal{F},K^{n})\rightarrow L^{0}(\mathcal{F},K^{n})$ by
$\lambda \cdot x=(\lambda\xi_{1},\lambda\xi_{2},\cdots,\lambda\xi_{n}), \forall\lambda\in L^{0}(\mathcal{F},K)$ and $ x=(\xi_{1},\xi_{2},\cdots,\xi_{n})$ $\in L^{0}(\mathcal{F},K^{n})$,  then clearly $L^{0}(\mathcal{F},K^{n})$ is a free $L^{0}(\mathcal{F},K)$-module of rank $n$.

\begin{Corollary}\label{cor1}
An $L^{0}(\mathcal{F},K)$-module $E$ is finitely generated if and only if $E$ is module isomorphic to a submodule $M$ of $L^{0}(\mathcal{F},K^{n})$ such that $M$ has the countable concatenation property, where $n$ denotes some positive integer.
\end{Corollary}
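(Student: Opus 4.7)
For the necessity direction, assume $E$ is generated by $n$ elements and apply Theorem \ref{thm1} to obtain a partition $\{A_0, A_1, \ldots, A_n\}$ of $\Omega$ together with a basis $\{y^i_1, \ldots, y^i_i\}$ of $\tilde{I}_{A_i}E$ on each stratum of positive probability. Every $x \in E$ has a unique representation $x = \sum_{i=0}^{n}\tilde{I}_{A_i}x$ with $\tilde{I}_{A_i}x = \sum_{j=1}^{i}\xi^i_j y^i_j$ for coefficients $\xi^i_j \in \tilde{I}_{A_i}L^0(\mathcal{F}, K)$. I would define $\phi : E \to L^0(\mathcal{F}, K^n)$ by letting the $j$-th coordinate of $\phi(x)$ be $\sum_{i=j}^{n}\xi^i_j$ (a finite sum of elements with pairwise disjoint supports). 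Uniqueness of the basis expansions makes $\phi$ a well-defined injective $L^0(\mathcal{F}, K)$-module homomorphism, so $E$ is module isomorphic to $M := \phi(E) \subset L^0(\mathcal{F}, K^n)$. To check that $M$ carries the countable concatenation property, observe that $E$ does: any concatenation $\sum_{k}\tilde{I}_{B_k}x_k$ with $x_k = \sum_{i=1}^{n}\mu^k_i x_i$ rewrites as $\sum_{i=1}^{n}\bigl(\sum_k \tilde{I}_{B_k}\mu^k_i\bigr) x_i$, each inner sum being well defined in $L^0(\mathcal{F}, K)$; the property transfers along $\phi$ to $M$.

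For the sufficiency direction, it is enough to prove that every submodule $M$ of $L^0(\mathcal{F}, K^n)$ enjoying the countable concatenation property is finitely generated; here I would mimic the proof of Theorem \ref{thm1} with $M$ in place of $E$. Set $\mathcal{A}_i = \{A \in \mathcal{F} \, : \, \tilde{I}_A M \text{ is free of rank } i \text{ over } \tilde{I}_A L^0(\mathcal{F}, K)\}$ for $0 \leq i \leq n$. The basis-patching argument of Lemma \ref{lem2} still shows each nonempty $\mathcal{A}_i$ is directed upwards, and substituting the hypothesized countable concatenation property of $M$ for the finite generation of $E$ used in Lemma \ref{lem2}'s proof yields $A_i := \mathrm{esssup}\,\mathcal{A}_i \in \mathcal{A}_i$. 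Setting $B = \Omega \setminus \bigcup_{i=0}^{n}A_i$, the inductive construction at the close of Theorem \ref{thm1}'s proof would produce, on any positive-probability $B$, a positive-probability subset $B_{n+1} \subset B$ together with $n+1$ elements of $M$ which are $\tilde{I}_{B_{n+1}}L^0$-independent, contradicting Lemma \ref{lem1} applied to their $L^0(\mathcal{F}, K^n)$-coordinates. The invariant dimension property for commutative rings \cite[Chapter 4, Corollary 2.12]{alge} then forces the $A_i$ to be essentially disjoint, and finally the $n$ elements $y_j := \sum_{i=j}^{n} y^i_j$, $1 \leq j \leq n$ (finite sums in $M$), generate $M$: any $m \in M$ decomposes as $m = \sum_i \sum_{j=1}^i \xi^i_j y^i_j = \sum_{j=1}^{n}\eta_j y_j$ with $\eta_j := \sum_{i=j}^{n}\xi^i_j \in L^0(\mathcal{F}, K)$.

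The principal obstacle is the sufficiency analogue of Lemma \ref{lem2} --- establishing $\mathrm{esssup}\,\mathcal{A}_i \in \mathcal{A}_i$ without knowing a priori that $M$ is finitely generated. In the original setting, finite generation of $E$ is invoked solely to supply the countable concatenation property used to patch bases across a nondecreasing exhaustion; the corollary's hypothesis on $M$ is precisely crafted to supply that input directly. Without the countable concatenation hypothesis the statement fails: the $L^0(\mathcal{F}, K)$-linear span of an infinite sequence of nonzero vectors in $L^0(\mathcal{F}, K^n)$ supported on pairwise disjoint positive-measure sets is a submodule that is neither finitely generated nor closed under countable concatenation. Once the analogue of Lemma \ref{lem2} is secured, the remainder of the sufficiency argument is essentially a transcription of the proof of Theorem \ref{thm1}.
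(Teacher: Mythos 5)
Your proposal is correct and follows essentially the same route as the paper: for necessity it builds the same coordinate embedding $T(x)=(\xi_1,\dots,\xi_n)$ from the Theorem \ref{thm1} decomposition and notes that finitely generated modules automatically have the countable concatenation property; for sufficiency it reruns the stratification argument of Lemma \ref{lem2} and the proof of Theorem \ref{thm1} with the hypothesized countable concatenation property of $M$ substituting for finite generation, exactly as the paper does. Your closing remark correctly isolates the one point that needs care (why $\mathrm{esssup}\,\mathcal{A}_i\in\mathcal{A}_i$ still holds), and your resolution matches the paper's.
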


\begin{proof}
Necessity. By Theorem \ref{thm1} there exist a positive integer $n$ and a finite partition $\{A_{0},A_{1},\cdots,A_{n}\}$ of $\Omega$ to $\mathcal{F}$ such that
$\tilde{I}_{A_{i}}E$ is a free module of rank $i$ over the algebra
$\tilde{I}_{A_{i}}L^{0}(\mathcal{F},K)$ for each $i$ which satisfies $0\leqslant i\leqslant n$ and $P(A_{i})>0$.
Let $L=\{i~|~1\leqslant i\leqslant n$ and $P(A_{i})>0\}$, then for each $i\in L$ there
exists a basis $\{x_{k}^{i}\in \tilde{I}_{A_{i}}E~|~1\leqslant k\leqslant i\}$ for the free $\tilde{I}_{A_{i}}L^{0}(\mathcal{F},K)$-module $\tilde{I}_{A_{i}}E$. It follows that for any $x\in E$ there uniquely exists a set $\{\xi_{k}^{i}\in \tilde{I}_{A_{i}}L^{0}(\mathcal{F},K)~|~1\leqslant k\leqslant i\}$ for any $i\in L$ such that $x=\sum_{i\in L}\sum_{1\leqslant k\leqslant i}\xi_{k}^{i}x_{k}^{i}$. Let
$\xi_{k}=\sum_{i\in L, i\geqslant k}\xi_{k}^{i}$ for each $k$ such that $1\leqslant k\leqslant n$ and $\{i\in L~|~i\geqslant k\}\neq \emptyset$, and $\xi_{k}=0$ otherwise, then we define $T: E\rightarrow L^{0}(\mathcal{F},K^{n})$ by $T(x)=(\xi_{1},\xi_{2},\cdots,\xi_{n})$. It is easy to check
that $T$ is an injective module homomorphism and $T(E)$ has the countable concatenation property since $T(E)$ is also
finitely generated.

Sufficiency. Suppose $M$ is a submodule of $L^{0}(\mathcal{F},K^{n})$ such that $M$ has the countable concatenation property.
For each nonnegative integer $k$, let us define$$
\mathcal{B}_{k}=\{B\in \mathcal {F}~|~\tilde{I}_{B}M~\mbox{is a free module of rank}~k~\mbox{over the algebra}~\tilde{I}_{B}L^{0}(\mathcal{F},K)\},$$
then it is easy to check that $\mathcal{B}_{k}=\emptyset$ for $k>n$. Let $B_{k}=$esssup$(\mathcal{B}_{k})$ for $0\leqslant k\leqslant n$ and $\mathcal{B}_{k}\neq \emptyset$, then
$B_{k}\in \mathcal{B}_{k}$ follows from the fact that $M$ has the countable concatenation property. Besides, $P(B_{j}\cap B_{k})=0$ if
$j\neq k$ and $\mathcal{B}_{j}$ and $\mathcal{B}_{k}$ are not empty. If $P(\bigcup\{B_{k}~|~0\leqslant k\leqslant n~\mbox{and}~ \mathcal{B}_{k}\neq \emptyset\})<1$, then by the argument used in the proof of Theorem \ref{thm1} there exist an $\mathcal{F}$-measurable set $A\subset \Omega\setminus \bigcup\{B_{k}~|~0\leqslant k\leqslant n~\mbox{and}~ \mathcal{B}_{k}\neq \emptyset\}$ and $\{x_{1},x_{2},\cdots,x_{n+1}\}\subset M$ such that $P(A)>0$ and $\{\tilde{I}_{A}x_{i}~|~1\leqslant i\leqslant n+1\}$ is $\tilde{I}_{A}L^{0}(\mathcal{F},K)$-independent, which is a contradiction. Thus $M$ is a direct sum of finite many finitely generated $L^{0}(\mathcal{F},K)$-modules, which implies $M$ is also finitely generated.
\end{proof}

\begin{remark}\label{rek}
There exist submodules of $L^{0}(\mathcal{F},K^{n})$ which do not have the countable concatenation property  and thus are not finitely generated. For example, let $\Omega=[0,1]$, $\mathcal{F}=$ the collection of Lebesgue measurable subsets of $[0,1]$ and $P=$ the Lebesgue measure on $[0,1]$. Suppose $M=\{\tilde{I}_{[2^{-n},2^{-n+1}]}~|~n\in N\}$ and $E=\{\sum_{i=1}^{n}\xi_{i}x_{i}~|~\xi_{i}\in L^{0}(\mathcal{F},K),x_{i}\in M,1\leqslant i\leqslant n$ and $n\in N\}$, then it is easy to see that $E$ is a submodule of
$L^{0}(\mathcal{F},K)$ such that $E$ does not have the countable concatenation property and is not a finitely generated $L^{0}(\mathcal{F},K)$-module.
\end{remark}

Similar to the notion of an $RN$ module, we have the notion of a random inner product module (briefly, an $RIP$ module) over $K$ with base $(\Omega,\mathcal{F},P)$ (see \cite{base,rela} for details). Define $\langle\cdot,\cdot\rangle:L^{0}(\mathcal{F},K^{n})\times L^{0}(\mathcal{F},K^{n})\rightarrow L^{0}(\mathcal{F},K)$
by $\langle x,y\rangle=\Sigma _{i=1}^{n}\xi_{i}\bar{\eta}_{i}$, $\forall\lambda\in L^{0}(\mathcal{F},K)$, $ x=(\xi_{1},\xi_{2},\cdots,\xi_{n})$ and $y=(\eta_{1},\eta_{2},\cdots,\eta_{n})\in L^{0}(\mathcal{F},K^{n})$.
It is easy to check that $(L^{0}(\mathcal{F},K^{n}),\langle\cdot,\cdot\rangle)$ is an $RIP$ module over $K$ with base $(\Omega,\mathcal{F},P)$ and it is, of course, also an $RN$ module. Moreover, $(L^{0}(\mathcal{F},K^{n}),\langle\cdot,\cdot\rangle)$ is complete with respect to the topology of convergence in probability $P$, which is exactly the $(\varepsilon,\lambda)$-topology on $(L^{0}(\mathcal{F},K^{n}),\langle\cdot,\cdot\rangle)$ (see Section 3). Specially, $L^{0}(\mathcal{F},K)$ is an $RN$ module and $\|\lambda\|=|\lambda|$ for any $\lambda\in L^{0}(\mathcal{F},K)$.

Recall that if $X$ is a proper linear subspace of $K^{n}$, then there exists
$x\in K^{n}$ such that $x\neq 0$ and $(x,y)=0$,$\forall y \in X$, where $(\cdot,\cdot)$ denotes the usual inner product. Corollary \ref{cor2} below shows that a proper submodule of $L^{0}(\mathcal{F},K^{n})$ with the countable concatenation property has a similar property.

\begin{Corollary}\label{cor2}
Suppose $M$ is a proper submodule of $L^{0}(\mathcal{F},K^{n})$ such that $M$ has the countable concatenation property, then there exists $x\in L^{0}(\mathcal{F},K^{n})$ such that
$x\neq 0$ and $\langle x,y\rangle=0$, $\forall y\in M$.
\end{Corollary}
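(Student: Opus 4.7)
The approach is to apply Theorem \ref{thm1} to both $M$ and to the quotient $Q:=L^{0}(\mathcal{F},K^{n})/M$, combine their rank partitions via a splitting short exact sequence to pick out an $\mathcal{F}$-measurable set on which $M$ has rank strictly smaller than $n$, and then invoke Lemma \ref{lem1} on that set to produce the desired vector $x$ by solving an underdetermined homogeneous system.

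By the sufficiency direction of Corollary \ref{cor1}, $M$ is a finitely generated $L^{0}(\mathcal{F},K)$-module, hence so is $Q$, being generated by the cosets of $e_{1},\ldots,e_{n}$. Theorem \ref{thm1} then yields finite partitions $\{A_{i}\}_{i=0}^{m}$ and $\{C_{j}\}_{j=0}^{n}$ of $\Omega$ to $\mathcal{F}$ such that $\tilde{I}_{A_{i}}M$ is free of rank $i$ over $\tilde{I}_{A_{i}}L^{0}(\mathcal{F},K)$ on each $A_{i}$ with $P(A_{i})>0$, and likewise for $\tilde{I}_{C_{j}}Q$; moreover the $M$-ranks are bounded by $n$, for otherwise Lemma \ref{lem1} applied to the $n\times i$ coordinate matrix of an oversized basis would produce a nontrivial dependence. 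Because $M$ is proper, $Q\neq\{0\}$, so some $j_{0}\geqslant 1$ satisfies $P(C_{j_{0}})>0$, and consequently $P(D)>0$ for $D:=A_{i_{0}}\cap C_{j_{0}}$ for some $i_{0}$. A direct check gives $\tilde{I}_{D}L^{0}(\mathcal{F},K^{n})\cap M=\tilde{I}_{D}M$, so the short exact sequence of $\tilde{I}_{D}L^{0}(\mathcal{F},K)$-modules
\[
0\longrightarrow \tilde{I}_{D}M\longrightarrow \tilde{I}_{D}L^{0}(\mathcal{F},K^{n})\longrightarrow \tilde{I}_{D}Q\longrightarrow 0
\]
is well-defined and exact, and splits because $\tilde{I}_{D}Q$ is free of rank $j_{0}$; this yields $\tilde{I}_{D}L^{0}(\mathcal{F},K^{n})\cong \tilde{I}_{D}M\oplus (\tilde{I}_{D}L^{0}(\mathcal{F},K))^{j_{0}}$. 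Comparing ranks via the invariant dimension property invoked in the proof of Theorem \ref{thm1} forces $i_{0}+j_{0}=n$, so $i_{0}<n$.

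With $i_{0}<n$ in hand, pick a basis $\{y_{1},\ldots,y_{i_{0}}\}$ of $\tilde{I}_{A_{i_{0}}}M$ with coordinates $y_{j}=(y_{j,1},\ldots,y_{j,n})$, and apply Lemma \ref{lem1} to the homogeneous system
\[
\sum_{\ell=1}^{n}\overline{y_{j,\ell}}\,\lambda_{\ell}=0,\qquad j=1,\ldots,i_{0},
\]
of $i_{0}<n$ equations in $n$ unknowns over $\tilde{I}_{A_{i_{0}}}L^{0}(\mathcal{F},K)$, obtaining a nontrivial $(\lambda_{1},\ldots,\lambda_{n})\in \tilde{I}_{A_{i_{0}}}L^{0}(\mathcal{F},K^{n})$. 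Regard $x:=(\lambda_{1},\ldots,\lambda_{n})$ as an element of $L^{0}(\mathcal{F},K^{n})$ supported on $A_{i_{0}}$; then $x\neq 0$, and for every $y\in M$ the decomposition $\tilde{I}_{A_{i_{0}}}y=\sum_{j}\alpha_{j}y_{j}$ gives $\langle x,y\rangle=\langle x,\tilde{I}_{A_{i_{0}}}y\rangle=\sum_{j}\overline{\alpha_{j}}\langle x,y_{j}\rangle=0$. The main technical step is the splitting and rank-additivity argument in the middle paragraph, which relies on the projectivity of the free module $\tilde{I}_{D}Q$ and on the invariant dimension property over the commutative ring $\tilde{I}_{D}L^{0}(\mathcal{F},K)$ already invoked in the proof of Theorem \ref{thm1}; everything else is routine manipulation with the essential-measure operations and Lemma \ref{lem1}.
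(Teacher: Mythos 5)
Your proof is correct, but it follows a genuinely different route from the paper's. The paper also begins by invoking Theorem \ref{thm1} and Corollary \ref{cor1} to decompose $M$ as $\bigoplus_{i=0}^{n}\tilde{I}_{A_{i}}M$ with each stratum free, but then it switches to topology: it cites a result from \cite{acha} to conclude that each free stratum $\tilde{I}_{A_{i}}M$ is $\mathcal{T}_{\varepsilon,\lambda}$-closed in $L^{0}(\mathcal{F},K^{n})$, hence that $M$ itself is closed, and then obtains the orthogonal vector from the orthogonal decomposition theorem in the complete $RIP$ module $L^{0}(\mathcal{F},K^{n})$ \cite[Corollary 4.1]{base}. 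You instead stay entirely inside the algebra of Section 2: the quotient $Q=L^{0}(\mathcal{F},K^{n})/M$, the split short exact sequence over $\tilde{I}_{D}L^{0}(\mathcal{F},K)$, and the invariant dimension property give $i_{0}+j_{0}=n$ with $j_{0}\geqslant 1$, hence a stratum on which $M$ has rank $i_{0}<n$, and Lemma \ref{lem1} then solves the underdetermined orthogonality system explicitly. Your argument is self-contained and constructive (it exhibits $x$ supported on a single stratum and needs no completeness or closedness facts), at the cost of being longer; the paper's argument is a two-line deduction but leans on external analytic machinery and, as a by-product, yields the full orthogonal complement rather than a single vector. Two small points you should patch: Lemma \ref{lem1} as stated requires the number of equations to be a positive integer, so the case $i_{0}=0$ (where $\tilde{I}_{A_{i_{0}}}M=\{\theta\}$) must be disposed of separately by taking, say, $x=\tilde{I}_{A_{i_{0}}}e_{1}$; and you should note explicitly that a basis of a free stratum restricts to a basis on any positive-measure measurable subset, which is what makes $\tilde{I}_{D}M$ and $\tilde{I}_{D}Q$ free of ranks $i_{0}$ and $j_{0}$ respectively --- both are routine and in the spirit of verifications the paper itself leaves to the reader.
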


\begin{proof}
Notice that $M$ is a proper submodule of
$L^{0}(\mathcal{F},K^{n})$ and has the countable concatenation property, it follows from Theorem \ref{thm1} and Corollary \ref{cor1}  there exists a partition $\{A_{0},A_{1},\cdots,A_{n}\}$ of $\Omega$ to $\mathcal{F}$ such that $M$ is free on $A_{i}$ for $0\leqslant i\leqslant n$. By \cite[Lemma 3.7]{acha} each $\tilde{I}_{A_{i}}M$ is a closed submodule of $L^{0}(\mathcal{F},K^{n})$ with respect to the $(\varepsilon,\lambda)$-topology, and it follows that $M$ is also a closed submodule of $L^{0}(\mathcal{F},K^{n})$ since $M=\bigoplus_{i=0}^{n}\tilde{I}_{A_{i}}M$. Consequently the existence of the desired element follows from the orthogonal decomposition theorem \cite[Corollary 4.1]{base}.
\end{proof}

\section{Helly Theorem}
\label{}

The proof of Theorem \ref{thm2} needs Lemma \ref{lem6} below as well as Theorem \ref{thm1}. To introduce Lemma \ref{lem6}, we give the notion of a random locally convex module, which includes the notion of an $RN$ module as a special case as follows:

\begin{definition}[\cite{pro,rela}]
An ordered pair $(E,\mathcal{P})$ is called a random locally convex module over $K$ with base $(\Omega,\mathcal{F},P)$ if $E$ is a left module over $L^{0}(\mathcal{F},K)$ and $\mathcal{P}$ is a family of $L^{0}$-seminorms such that $\bigvee\{\|x\|~|~\|\cdot\|\in \mathcal{P}\}=0$ if and only if $x=\theta$.
\end{definition}

Clearly, when $\mathcal{P}$ reduces to a singleton $\{\|\cdot\|\}$, then a random locally convex module $(E,\mathcal{P})$ is exactly an $RN$ module.

Given a random locally convex module $(E,\mathcal{P})$ over $K$ with base $(\Omega,\mathcal{F},P)$, we always denote the set of finite subfamilies of $\mathcal{P}$ by $\mathcal{F}(\mathcal{P})$. For each $\mathcal{Q}\in \mathcal{F}(\mathcal{P})$, define $\|\cdot\|_{\mathcal{Q}}:E\rightarrow L^{0}_{+}(\mathcal{F})$ by  $\|x\|_{\mathcal{Q}}=\bigvee\{\|x\|~|~\|\cdot\|\in \mathcal{Q}\},\forall x\in E$. Let $\varepsilon$ and $\lambda$ be any two positive numbers such that $0<\lambda <1$, define $N_{\theta}(\mathcal{Q},\varepsilon,\lambda)=\{x\in E~|~P(\{\omega\in \Omega~|~\|x\|_{\mathcal{Q}}(\omega)<\varepsilon\})>1-\lambda\}$ and denote $\mathcal{N}_{\theta}=\{N_{\theta}(\mathcal{Q},\varepsilon,\lambda)~|~\mathcal{Q}\in \mathcal{F}(\mathcal{P}), \varepsilon>0, 0<\lambda <1\}$. Then $\mathcal{N}_{\theta}$ becomes a local base at $\theta$ of some Hausdorff linear topology, called the $(\varepsilon,\lambda)$-topology for $(E,\mathcal{P})$. As shown in \cite{pro,rela}, $L^{0}(\mathcal{F},K)$ is a topological algebra over $K$ in the $(\varepsilon,\lambda)$-topology, and a random locally convex module $(E,\mathcal{P})$ over $K$ with base $(\Omega,\mathcal{F},P)$ is a topological module over the topological algebra $L^{0}(\mathcal{F},K)$ when $E$ and $L^{0}(\mathcal{F},K)$ are endowed with their $(\varepsilon,\lambda)$-topologies, respectively. In the sequel of this paper, we always denote by $\mathcal{T}_{\varepsilon,\lambda}$ the $(\varepsilon,\lambda)$-topology for every random locally convex module whenever no confusion exists.

Filipovi\'{c}, Kupper and Vogelpoth \cite{DMN} introduced a new topology for a random locally convex module: given a random locally convex module $(E,\mathcal{P})$ over $K$ with base $(\Omega,\mathcal{F},P)$, a subset $G$ of $E$ is $\mathcal{T}_{c}$-open if for each $x\in G$ there exist a finite subfamily $\mathcal{Q}$ of $\mathcal{P}$ and $\epsilon\in L^{0}_{++}(\mathcal{F})$ such that $x+B_{\mathcal{Q}}(\epsilon)\subset G$, where $B_{\mathcal{Q}}(\epsilon)=\{y\in E~|~\|y\|_{\mathcal{Q}}\leqslant \epsilon\}$. Denote $\mathcal{T}_{c}$ by the family of $\mathcal{T}_{c}$-open subsets of $E$, then $\mathcal{T}_{c}$ becomes a Hausdorff topology and $(E,\mathcal{T}_{c})$ is a locally $L^{0}$-convex module in the sense of \cite{DMN}, so we often call the topology $\mathcal{T}_{c}$ the locally $L^{0}$-convex topology induced by $\mathcal{P}$. As shown in \cite{DMN}, $L^{0}(\mathcal{F},K)$ is a topological ring and $(E,\mathcal{T}_{c})$ is a topological module over the topological ring $L^{0}(\mathcal{F},K)$ when $(E,\mathcal{P})$ and $L^{0}(\mathcal{F},K)$ are endowed with their locally $L^{0}$-convex topologies, respectively. From now on, we always denote by $\mathcal{T}_{c}$ the locally $L^{0}$-convex topology for each random locally convex module whenever no confusion exists.

Given a random locally convex module $(E,\mathcal{P})$ over $K$ with base $(\Omega,\mathcal{F},P)$, let $E^{*}_{\varepsilon,\lambda}$ be the $L^{0}(\mathcal{F},K)$-module of continuous module homomorphisms from $(E,\mathcal{T}_{\varepsilon,\lambda})$ to $(L^{0}(\mathcal{F},K),\mathcal{T}_{\varepsilon,\lambda})$, called the random conjugate space of $(E,\mathcal{P})$ under the $(\varepsilon,\lambda)$-topology, and $E^{*}_{c}$ the $L^{0}(\mathcal{F},K)$-module of continuous module homomorphisms from $(E,\mathcal{T}_{c})$ to $(L^{0}(\mathcal{F},K),\mathcal{T}_{c})$, called the random conjugate space of $(E,\mathcal{P})$ under the locally $L^{0}$-convex topology. It was proved in \cite{rela} that $E^{*}_{c}\subset E^{*}_{\varepsilon,\lambda}$ and that $E^{*}_{c}=E^{*}_{\varepsilon,\lambda}$ if $\mathcal{P}$ has the countable concatenation property, where $\mathcal{P}$ is said to have the countable concatenation property \cite{DMN} if $\sum_{n\in N}\tilde{I}_{A_{n}}\|\cdot\|_{\mathcal{Q}_{n}}\in \mathcal{P}$ for any countable partition $\{A_{n}~|~n\in N\}$ of $\Omega$ to $\mathcal{F}$ and any sequence $\{\mathcal{Q}_{n},n\in N\}$ of finite subfamilies of $\mathcal{P}$. Specially, we have $E^{*}_{c}=E^{*}_{\varepsilon,\lambda}$ for every $RN$ module $(E,\|\cdot\|)$, so we always use $E^{*}$ for $E^{*}_{c}$ or $E^{*}_{\varepsilon,\lambda}$ for an $RN$ module
$(E,\|\cdot\|)$.

In fact, we proved in \cite{exte,rela,james} that a linear operator $f$ from an $RN$ module $(E,\|\cdot\|)$ over $K$ with base $(\Omega,\mathcal{F},P)$ to $L^{0}(\mathcal{F},K)$ belongs to $E^{*}$ if and only if there exists some $\xi$ in $L^{0}_{+}(\mathcal{F})$ such that $|f(x)|\leqslant \xi \|x\|$, $\forall x \in E$, thus $E^{*}$ coincides with Definition 1.3 and $\|f\|^{*}=\bigvee\{|f(x)|~|~x\in E$ and $\|x\|\leqslant1\}$.

\begin{lemma}\label{lem4}
Let $(E,\mathcal{P})$ be a random locally convex module over $K$ with base $(\Omega,\mathcal{F},P)$ such that $\mathcal{P}$ has the countable concatenation property. If a subset $G$ of $E$ has the countable concatenation property, then so does the $\mathcal{T}_{c}$-interior $G^{\circ}$ of $G$.
\end{lemma}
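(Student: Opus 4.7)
The plan is to fix a countable partition $\{A_n\}_{n\in N}$ of $\Omega$ to $\mathcal{F}$ and a sequence $\{x_n\}_{n\in N}\subset G^{\circ}$, and to produce an element $x\in G^{\circ}$ witnessing $\tilde{I}_{A_n}x=\tilde{I}_{A_n}x_n$ for every $n$. Since $G^{\circ}\subset G$ and $G$ itself has the countable concatenation property, such an $x$ already exists in $G$; the real work is to exhibit a $\mathcal{T}_{c}$-basic neighborhood of $x$ sitting inside $G$.

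First I would unpack what it means for each $x_n$ to be a $\mathcal{T}_{c}$-interior point of $G$: there exist $\mathcal{Q}_n\in\mathcal{F}(\mathcal{P})$ and $\epsilon_n\in L^{0}_{++}(\mathcal{F})$ with $x_n+B_{\mathcal{Q}_n}(\epsilon_n)\subset G$. The key move is to combine these data along the partition by invoking the countable concatenation property of $\mathcal{P}$: define $\rho:=\sum_{n\in N}\tilde{I}_{A_n}\|\cdot\|_{\mathcal{Q}_n}$, which by hypothesis lies in $\mathcal{P}$, and $\epsilon:=\sum_{n\in N}\tilde{I}_{A_n}\epsilon_n\in L^{0}_{++}(\mathcal{F})$ (well defined by gluing representatives on the pairwise disjoint sets $A_n$). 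The goal then becomes $x+B_{\{\rho\}}(\epsilon)\subset G$.

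To verify this, I would take $y\in E$ with $\rho(y)\leqslant\epsilon$. Since the partition structure collapses $\rho(y)$ to $\|y\|_{\mathcal{Q}_n}$ on $A_n$, one has $\|y\|_{\mathcal{Q}_n}\leqslant\epsilon_n$ a.s. on $A_n$, whence $\|\tilde{I}_{A_n}y\|_{\mathcal{Q}_n}=\tilde{I}_{A_n}\|y\|_{\mathcal{Q}_n}\leqslant\tilde{I}_{A_n}\epsilon_n\leqslant\epsilon_n$; that is, $\tilde{I}_{A_n}y\in B_{\mathcal{Q}_n}(\epsilon_n)$. Consequently $z_n:=x_n+\tilde{I}_{A_n}y\in x_n+B_{\mathcal{Q}_n}(\epsilon_n)\subset G$ for every $n$. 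A routine computation using $\tilde{I}_{A_n}x=\tilde{I}_{A_n}x_n$ and $\tilde{I}_{A_n}^{2}=\tilde{I}_{A_n}$ then yields $\tilde{I}_{A_n}(x+y)=\tilde{I}_{A_n}z_n$ for all $n$, so $x+y$ witnesses the formal concatenation $\sum_{n\in N}\tilde{I}_{A_n}z_n$; the countable concatenation property of $G$ now forces $x+y\in G$, which completes the inclusion $x+B_{\{\rho\}}(\epsilon)\subset G$ and hence the proof.

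I do not foresee a serious obstacle. The proof rests on two technical points: first, assembling a single seminorm $\rho$ via the countable concatenation property of $\mathcal{P}$ so that the singleton $\{\rho\}\in\mathcal{F}(\mathcal{P})$ simultaneously controls all of the $\|\cdot\|_{\mathcal{Q}_n}$; second, the localization trick of replacing $y$ by $\tilde{I}_{A_n}y$, which upgrades a merely local bound $\|y\|_{\mathcal{Q}_n}\leqslant\epsilon_n$ on $A_n$ into a genuine global bound that can be fed into the hypothesis $x_n+B_{\mathcal{Q}_n}(\epsilon_n)\subset G$. Everything else is bookkeeping with the idempotents $\tilde{I}_{A_n}$ and the definition of the countable concatenation property.
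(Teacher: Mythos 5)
Your proof is correct and follows exactly the paper's argument: form $x=\sum_{n}\tilde I_{A_n}x_n\in G$ via the countable concatenation property of $G$, glue the seminorms and radii into $\rho=\sum_{n}\tilde I_{A_n}\|\cdot\|_{\mathcal Q_n}\in\mathcal P$ and $\epsilon=\sum_{n}\tilde I_{A_n}\epsilon_n$, and verify $x+B_{\{\rho\}}(\epsilon)\subset G$. The paper leaves that last inclusion as ``easy to check''; your localization step $y\mapsto\tilde I_{A_n}y$ and the concatenation $x+y=\sum_n\tilde I_{A_n}(x_n+\tilde I_{A_n}y)$ is precisely the intended verification.
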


\begin{proof}
Suppose $G^{\circ}\neq \emptyset$, $\{x_{n}~|~n\in N\}\subset G^{\circ}$ and $\{A_{n}~|~n\in N\}$ is a countable partition of $\Omega$ to $\mathcal{F}$, then there exists $x\in G$ such that $x=\Sigma_{n\in N}\tilde{I}_{A_{n}}x_{n}$ by the countable concatenation property of $G$. If a sequence $\{\mathcal{Q}_{n},n\in N\}$ of finite subfamilies of $\mathcal{P}$ and $\{\epsilon_{n}\in L^{0}_{++}(\mathcal{F})~|~n\in N\}$ satisfy $x_{n}+B_{\mathcal{Q}_{n}}(\epsilon_{n})\subset G$ for each $n\in N$, then it is easy to check that $x+B_{\{\|\cdot\|\}}(\epsilon)\subset G$, where $\|\cdot\|=\sum_{n\in N}\tilde{I}_{A_{n}}\|\cdot\|_{\mathcal{Q}_{n}}$ and $\epsilon=\Sigma_{n\in N}\tilde{I}_{A_{n}}\epsilon_{n}$, so that $G^{\circ}$ has the countable concatenation property.
\end{proof}

In fact, Lemma \ref{lem4} motivates an interesting result, which is Proposition \ref{pro3} below.
\begin{Proposition}\label{pro3}
If a random locally convex module $(E,\mathcal{P})$ possesses a nonempty $\mathcal{T}_{c}$-open subset $G$ such that $G$ has the countable concatenation property, then $E$ must have the countable concatenation property.
\end{Proposition}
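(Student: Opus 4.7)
The plan is to exploit the $\mathcal{T}_{c}$-openness of $G$ to translate and rescale an arbitrary sequence from $E$ so that it sits inside $G$, apply the countable concatenation property of $G$ there, and then undo the rescaling. First I would pick any $x_{0}\in G$ and, using the definition of $\mathcal{T}_{c}$, produce a finite subfamily $\mathcal{Q}_{0}\in\mathcal{F}(\mathcal{P})$ and $\epsilon_{0}\in L^{0}_{++}(\mathcal{F})$ with $x_{0}+B_{\mathcal{Q}_{0}}(\epsilon_{0})\subset G$.

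Given an arbitrary sequence $\{x_{n},n\in N\}\subset E$ and a countable partition $\{A_{n},n\in N\}$ of $\Omega$ to $\mathcal{F}$, the key move is to rescale each $x_{n}$ uniformly into the ball. Set $\eta_{n}=1+\|x_{n}-x_{0}\|_{\mathcal{Q}_{0}}\in L^{0}_{++}(\mathcal{F})$ and $z_{n}=x_{0}+\epsilon_{0}\eta_{n}^{-1}(x_{n}-x_{0})$; then $\|z_{n}-x_{0}\|_{\mathcal{Q}_{0}}=\epsilon_{0}\|x_{n}-x_{0}\|_{\mathcal{Q}_{0}}\eta_{n}^{-1}\leqslant\epsilon_{0}$, so $z_{n}\in G$. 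I would then invoke the countable concatenation property of $G$ to obtain $y\in G$ with $\tilde{I}_{A_{n}}y=\tilde{I}_{A_{n}}z_{n}$, equivalently $\tilde{I}_{A_{n}}(y-x_{0})=\tilde{I}_{A_{n}}\epsilon_{0}\eta_{n}^{-1}(x_{n}-x_{0})$, for every $n\in N$.

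To undo the rescaling I would paste the scalars back as $\eta=\sum_{n\in N}\tilde{I}_{A_{n}}\eta_{n}\epsilon_{0}^{-1}\in L^{0}(\mathcal{F},K)$; this is well defined because $L^{0}(\mathcal{F},K)$ itself always admits countable concatenations along measurable partitions (equivalence classes of measurable functions can be pasted together measurably). Setting $w=x_{0}+\eta(y-x_{0})\in E$, the orthogonality $\tilde{I}_{A_{n}}\tilde{I}_{A_{m}}=0$ for $n\neq m$ gives the routine computation $\tilde{I}_{A_{n}}w=\tilde{I}_{A_{n}}x_{0}+\tilde{I}_{A_{n}}\eta_{n}\epsilon_{0}^{-1}\cdot\tilde{I}_{A_{n}}(y-x_{0})=\tilde{I}_{A_{n}}x_{n}$ for every $n$, so $\Sigma_{n\in N}\tilde{I}_{A_{n}}x_{n}$ is well defined in $E$, which is exactly what is needed.

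The main obstacle I anticipate is designing the rescaling so that the reassembly works: the countable concatenation property of $G$ hands back only a single element $y$, whereas the natural multipliers $\eta_{n}$ vary with $n$. This forces the two-stage trick of first reducing every $x_{n}$ to a common scale inside $B_{\mathcal{Q}_{0}}(\epsilon_{0})$ and only afterwards reassembling a single $L^{0}(\mathcal{F},K)$-valued multiplier $\eta$ by concatenation of scalars. Note that this argument uses only the countable concatenation property of the specific set $G$, not of $\mathcal{P}$ as a whole, so it is independent of the hypothesis invoked in Lemma \ref{lem4}.
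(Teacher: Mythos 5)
Your proof is correct and follows essentially the same strategy as the paper's: rescale each $x_{n}$ by a strictly positive scalar so that it lands in the open set $G$, apply the countable concatenation property of $G$ to get a single element $y$, and then recover $\Sigma_{n\in N}\tilde{I}_{A_{n}}x_{n}$ by multiplying with the pasted-together inverse scalars. The only cosmetic difference is that the paper normalizes to $\theta\in G$ and invokes continuity of the module multiplication to produce the scalars $\lambda_{n}$, whereas you translate by an arbitrary $x_{0}\in G$ and construct the multipliers $\epsilon_{0}\eta_{n}^{-1}$ explicitly from the seminorm ball.
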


\begin{proof}
We can, without loss of generality, suppose $\theta\in G$. Let $\{y_{n}~|~n\in N\}$ be a sequence in $E$ and $\{A_{n}~|~n\in N\}$ a countable partition of $\Omega$ to $\mathcal{F}$, then for each $n\in N$ there exist a finite subset $\mathcal{Q}_{n}$ of $\mathcal{P}$ and $\lambda_{n}\in L^{0}_{++}(\mathcal{F})$ such that
$$\{\xi x~|~\xi\in L^{0}(\mathcal{F},K)~\mbox{such that}~|\xi|\leqslant 2\lambda_{n}~\mbox{and}~x\in y_{n}+B_{\mathcal{Q}_{n}}(\lambda_{n})\}\subset G$$
since $0\cdot y_{n}=\theta$ and the module multiplication is continuous with respect to $\mathcal{T}_{c}$,
which also implies $\lambda_{n}y_{n}\in G$. Moreover, by the countable concatenation property of $G$ there exists $y\in G$ such that $y=\Sigma_{n\in N}\tilde{I}_{A_{n}}\lambda_{n}y_{n}$, then letting $\eta=\Sigma_{n\in N}\tilde{I}_{A_{n}}\lambda_{n}^{-1}$ one can easily check $\eta y=\Sigma_{n\in N}\tilde{I}_{A_{n}}y_{n}$, namely $\Sigma_{n\in N}\tilde{I}_{A_{n}}y_{n}\in E$.
\end{proof}

Let us recall that $G$ is an $L^{0}$-convex subset of an $L^{0}(\mathcal{F},K)$-module $E$ if $\lambda x+(1-\lambda)y\in G$ for all $x,y\in G$ and $\lambda\in L^{0}_{+}(\mathcal{F})$ such that $\lambda\leqslant 1$. Lemma \ref{lem6} below is merely a variant of \cite[Theorem 3.15]{rela}.

\begin{lemma}\label{lem6}
Let $(E,\mathcal{P})$ be a random locally convex module over $K$ with base $(\Omega,\mathcal{F},P)$ and $G$ and $M$ two nonempty $L^{0}$-convex subsets of $E$ such that the
$\mathcal{T}_{c}$-interior $G^{\circ}$ of $G$ is not empty and $H_{cc}(G^{\circ})\cap H_{cc}(M)=\emptyset$. Then there exists $f \in E^{*}_{c}$ such that

$(Ref)(x)\leqslant (Ref)(y)$ for all $x \in G$ and $y\in M$

\noindent and

$(Ref)(x)< (Ref)(y)$ on $H(G^{\circ},M)$ for all $x \in G^{\circ}$ and $y\in M$.

\noindent Where $(Ref)(x)=Re(f(x))$, $\forall x\in E$.
\end{lemma}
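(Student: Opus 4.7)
The plan is to derive Lemma \ref{lem6} directly from \cite[Theorem 3.15]{rela}. The key observation is that $G^{\circ}$ itself is a nonempty $L^{0}$-convex $\mathcal{T}_{c}$-open subset of $E$: $L^{0}$-convexity of $G^{\circ}$ follows from that of $G$ by the standard interior argument. Indeed, if $x,y\in G^{\circ}$ and a common neighbourhood is chosen so that $x+B_{\mathcal{Q}}(\varepsilon)\subset G$ and $y+B_{\mathcal{Q}}(\varepsilon)\subset G$, then for any $\lambda\in L^{0}_{+}(\mathcal{F})$ with $\lambda\leqslant 1$ and any $w\in B_{\mathcal{Q}}(\varepsilon)$ one has $(\lambda x+(1-\lambda)y)+w=\lambda(x+w)+(1-\lambda)(y+w)\in G$, showing $\lambda x+(1-\lambda)y\in G^{\circ}$. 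Since $H_{cc}(G^{\circ})\cap H_{cc}(M)=\emptyset$ by hypothesis, applying \cite[Theorem 3.15]{rela} to the pair $(G^{\circ},M)$ produces some $f\in E^{*}_{c}$ with $(Re f)(x)\leqslant (Re f)(y)$ for all $x\in G^{\circ}$, $y\in M$, and $(Re f)(x)<(Re f)(y)$ on $H(G^{\circ},M)$ for all such $x$ and $y$. This already furnishes the strict-inequality clause of Lemma \ref{lem6}.

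It remains to extend the weak inequality from $G^{\circ}$ to all of $G$. Fix $x_{0}\in G^{\circ}$ and let $x\in G$ be arbitrary. For each real $t\in(0,1]$, the affine map $\varphi_{t}:u\mapsto tu+(1-t)x$ is a $\mathcal{T}_{c}$-homeomorphism of $E$, and by $L^{0}$-convexity of $G$ (applied with the constant scalar $tI_{\Omega}\in L^{0}_{+}(\mathcal{F})$, $tI_{\Omega}\leqslant 1$) we have $\varphi_{t}(G^{\circ})\subset G$; since $\varphi_{t}(G^{\circ})$ is $\mathcal{T}_{c}$-open, in fact $\varphi_{t}(G^{\circ})\subset G^{\circ}$. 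Thus $z_{t}:=\varphi_{t}(x_{0})=tx_{0}+(1-t)x\in G^{\circ}$, and the first step yields $(Re f)(z_{t})\leqslant (Re f)(y)$ for every $y\in M$. As $t\to 0^{+}$ through real numbers, $z_{t}-x=t(x_{0}-x)$ tends to $\theta$ in the $(\varepsilon,\lambda)$-topology; since $E^{*}_{c}\subset E^{*}_{\varepsilon,\lambda}$, $f$ is $\mathcal{T}_{\varepsilon,\lambda}$-continuous, and passing to an almost surely convergent subsequence then delivers $(Re f)(x)\leqslant (Re f)(y)$.

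The main obstacle is the verification that $\varphi_{t}(G^{\circ})\subset G$, i.e., that the real convex combination of a point in $G^{\circ}$ with a point of $G$ still lies in $G$; this is settled by the $L^{0}$-convexity of $G$ applied with the constant scalar $t$ viewed as an element of $L^{0}_{+}(\mathcal{F})$. Everything else is a routine reduction to \cite[Theorem 3.15]{rela} combined with a continuity passage to the limit. In particular, the strict-separation portion of the conclusion comes for free from the cited separation theorem, while our extra work only contributes the weak inequality on the full set $G$.
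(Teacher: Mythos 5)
Your proof is correct and follows essentially the same route as the paper's: both apply \cite[Theorem 3.15]{rela} to the pair $(G^{\circ},M)$ (after noting that $G^{\circ}$ is $L^{0}$-convex) and then extend the weak inequality from $G^{\circ}$ to all of $G$ by a closure/continuity argument. The paper compresses your line-segment step into the single remark that $G$ is contained in the $\mathcal{T}_{c}$-closure of $G^{\circ}$ and that $f$ is continuous; your explicit passage through real segments $tx_{0}+(1-t)x$ together with the $(\varepsilon,\lambda)$-continuity of $f$ (via $E^{*}_{c}\subset E^{*}_{\varepsilon,\lambda}$) is a careful, and in fact slightly safer, justification of that same remark.
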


\begin{proof}
Clearly $G^{\circ}$ is also $L^{0}$-convex, then it follows from \cite[Theorem 3.15]{rela} that there exists $f \in E^{*}_{c}$ such that

$(Ref)(x)<(Ref)(y)$ for all $x \in G^{\circ}$ and $y\in M$

\noindent and

$(Ref)(x)< (Ref)(y)$ on $H(G^{\circ},M)$ for all $x \in G^{\circ}$ and $y\in M$.

\noindent It follows that $(Ref)(x)\leqslant (Ref)(y)$ for all $x \in G$ and $y\in M$ since $f$ is continuous and $G$ is included in the $\mathcal{T}_{c}$-closure of $G^{\circ}$.
\end{proof}

In this paper we only need the following special case of Lemma \ref{lem6}:

\begin{Corollary}\label{lem5}
Let $(E,\mathcal{P})$ be a random locally convex module over $K$ with base $(\Omega,\mathcal{F},P)$ such that $\mathcal{P}$ has the countable concatenation property and $G$ and $M$ two nonempty $L^{0}$-convex subsets of $E$ such that  $G$ and $M$ have the countable concatenation property, the
$\mathcal{T}_{c}$-interior $G^{\circ}$ of $G$ is not empty and $G^{\circ}\cap M=\emptyset$. Then there exists $f \in E^{*}_{c}$ such that

$(Ref)(x)\leqslant (Ref)(y)$ for all $x \in G$ and $y\in M$

\noindent and

$(Ref)(x)< (Ref)(y)$ on $H(G^{\circ},M)$ for all $x \in G^{\circ}$ and $y\in M$.

\end{Corollary}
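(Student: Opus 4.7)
The plan is to realize Corollary \ref{lem5} as a direct specialization of Lemma \ref{lem6}, so the entire task reduces to verifying that the stronger separation hypothesis $H_{cc}(G^{\circ})\cap H_{cc}(M)=\emptyset$ of Lemma \ref{lem6} automatically holds under the countable concatenation hypotheses of the corollary. Once this is shown, the conclusions (both the weak and the strict inequality on the hereditarily disjoint stratification) are literally those of Lemma \ref{lem6} and no extra argument is needed.

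First I would invoke Lemma \ref{lem4}: since $\mathcal{P}$ has the countable concatenation property and $G$ has the countable concatenation property, the $\mathcal{T}_{c}$-interior $G^{\circ}$ of $G$ has the countable concatenation property as well. Combined with the hypothesis that $M$ has the countable concatenation property, this gives $H_{cc}(G^{\circ})=G^{\circ}$ and $H_{cc}(M)=M$ directly from the definition of $H_{cc}$ (a set has the countable concatenation property iff it equals its countable concatenation hull, as noted in the excerpt right after Definition 2.4). Therefore the assumption $G^{\circ}\cap M=\emptyset$ in the corollary translates verbatim into $H_{cc}(G^{\circ})\cap H_{cc}(M)=\emptyset$.

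Next I would note the two other hypotheses of Lemma \ref{lem6}: that $G$ and $M$ are nonempty $L^{0}$-convex, and that $G^{\circ}$ is nonempty. Both are explicitly given in the corollary. ($L^{0}$-convexity of $G^{\circ}$, needed inside Lemma \ref{lem6}'s proof, is not an additional burden on us.) With all hypotheses of Lemma \ref{lem6} verified, applying it produces an $f\in E^{*}_{c}$ satisfying $(\mathrm{Re}\,f)(x)\leqslant (\mathrm{Re}\,f)(y)$ for all $x\in G$, $y\in M$, together with the strict inequality $(\mathrm{Re}\,f)(x)<(\mathrm{Re}\,f)(y)$ on $H(G^{\circ},M)$ for $x\in G^{\circ}$, $y\in M$, which is exactly the desired conclusion.

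There is no genuine obstacle here — the content of the corollary is packaged inside Lemma \ref{lem4}, and the only thing to check is the compatibility of the two notions of disjointness under the countable concatenation property. If any subtlety were to arise, it would be confirming that Lemma \ref{lem4} truly applies (i.e.\ $G^{\circ}\neq\emptyset$, which is assumed), and that passage through $H_{cc}$ is the identity on sets with the countable concatenation property. Both are immediate, so the proof is essentially a one-line reduction.
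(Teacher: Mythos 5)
Your proposal is correct and coincides with the paper's own proof: the paper likewise applies Lemma \ref{lem4} to conclude that $G^{\circ}$ has the countable concatenation property, deduces $H_{cc}(G^{\circ})=G^{\circ}$ and $H_{cc}(M)=M$, and then invokes Lemma \ref{lem6}. No gaps.
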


\begin{proof}
By Lemma \ref{lem4} $G^{\circ}$ has the countable concatenation property, and hence $H_{cc}(G^{\circ})=G^{\circ}$ and $H_{cc}(M)=M$, our desired result follows from Lemma \ref{lem6}.
\end{proof}

We can now prove Theorem \ref{thm2}.

\newproof{pot1}{Proof of Theorem \ref{thm2}}
\begin{pot1}
Necessity is obvious, it remains to prove sufficiency.

Let $S=\{\Sigma_{i=1}^{n}\zeta_{i}f_{i}~|~\zeta_{i}\in L^{0}(\mathcal{F},K), 1\leqslant i\leqslant n\}$, then $S$ is a finitely generated $L^{0}(\mathcal{F},K)$ module. By Theorem \ref{thm1} there exists a finite partition $\{A_{0},A_{1},\cdots,A_{n}\}$ of $\Omega$ to $\mathcal{F}$ such that $\tilde{I}_{A_{i}}S$ is a free $\tilde{I}_{A_{i}}L^{0}(\mathcal{F},K)$-module of rank $i$ for each $i$ which satisfies $0\leqslant i\leqslant n$ and $P(A_{i})>0$. Let $\{g_{j}\in \tilde{I}_{A_{i}}S~|~1\leqslant j\leqslant i\}$ be a basis for $\tilde{I}_{A_{i}}S$ for some $i$ such that $1\leqslant i\leqslant n$ and $P(A_{i})>0$, and suppose $g_{j}=\Sigma_{k=1}^{n}\zeta_{kj}f_{k}$ for some $\zeta_{kj}\in \tilde{I}_{A_{i}}L^{0}(\mathcal{F},K)$, where $1\leqslant k\leqslant n$ and $1\leqslant j\leqslant i$.

Let $\gamma_{j}=\Sigma_{k=1}^{n}\zeta_{kj}\xi_{k}$($1\leqslant j\leqslant i$), then
$$|\sum_{j=1}^{i}\lambda_{j}\gamma_{j}|=|\sum_{j=1}^{i}\sum_{k=1}^{n}\lambda_{j}\zeta_{kj}\xi_{k}|\leqslant \beta\|\sum_{j=1}^{i}\sum_{k=1}^{n}\lambda_{j}\zeta_{kj}f_{k}\|^{*}=\beta\|\sum_{j=1}^{i}\lambda_{j}g_{j}\|^{*}$$
for any $\lambda_{1},\lambda_{2},\cdots \lambda_{n}\in L^{0}(\mathcal{F},K)$.

If for each $i$ such that $P(A_{i})>0$ there exists $x_{A_{i}}\in \tilde{I}_{A_{i}}E$ such that $\|x_{A_{i}}\|\leq \tilde{I}_{A_{i}}(\beta+\varepsilon)$ and $g_{j}(x_{A_{i}})=\gamma_{j}$ for each $j$ such that $1\leqslant j\leqslant i$, then $f_{k}(x_{A_{i}})=\tilde{I}_{A_{i}}\xi_{k}$ for each $k$ such that $1\leqslant k\leqslant n$. In fact, suppose $\tilde{I}_{A_{i}}f_{k}=\sum_{j=1}^{i}\eta_{jk}g_{j}$ for some $\eta_{jk}\in \tilde{I}_{A_{i}}L^{0}(\mathcal{F},K)$ ($1\leqslant j\leqslant i$ and $1\leqslant k\leqslant n$), then
$$|\tilde{I}_{A_{i}}\xi_{k}-\sum_{j=1}^{i}\eta_{jk}\gamma_{j}|\leqslant\beta\|\tilde{I}_{A_{i}}f_{k}-\sum_{j=1}^{i}
\eta_{jk}g_{j}\|^{*}=0,$$
i.e. $\tilde{I}_{A_{i}}\xi_{k}=\sum_{j=1}^{i}\eta_{jk}\gamma_{j}$ ($1\leqslant k\leqslant n$). Hence
$$f_{k}(x_{A_{i}})=\sum_{j=1}^{i}\eta_{jk}g_{j}(x_{A_{i}})=\sum_{j=1}^{i}\eta_{jk}\gamma_{j}=\tilde{I}_{A_{i}}\xi_{k},
~k=1,2, \cdots,n.$$

If $P(A_{i})=0$, then we define $x_{A_{i}}=0$. Finally, we define $x_{\varepsilon}=\sum_{i=0}^{n}x_{A_{i}}$, then $x_{\varepsilon}$ will satisfy (1) $f_{i}(x_{\varepsilon})=\xi_{i}$ for any $i$ such that $1\leqslant i\leqslant n$ and (2) $\|x_{\varepsilon}\|\leqslant \beta+\varepsilon$.

Thus we can, without loss of generality, suppose
$\{f_{1},f_{2},\cdots,f_{n}\}$ is $L^{0}(\mathcal{F},K)$-independent, otherwise we can consider $\tilde{I}_{A_{i}}E$ for each $i$ such that $P(A_{i})>0$, further take $\xi_{k}^{i}=\tilde{I}_{A_{i}}\xi_{k}$, $f_{k}^{i}=\tilde{I}_{A_{i}}f_{k}$ for each $k$ such that $1\leq k \leq n$, $\beta^{i}=\tilde{I}_{A_{i}}\beta$ and $\varepsilon^{i}=\tilde{I}_{A_{i}}\varepsilon$, since $\tilde{I}_{A_{i}}E$ can be regarded as an $RN$ module with base $(A_{i},A_{i}\cap\mathcal{F},P_{i})$, where $P_{i}: A_{i}\cap\mathcal{F}\rightarrow [0,1]$ is defined by $P_{i}(A_{i}\cap A)=P(A_{i}\cap A) / P(A_{i})$ for any $A\in \mathcal{F}$, once we prove this theorem for the case when $\{f_{1},f_{2},\cdots,f_{n}\}$ is $L^{0}(\mathcal{F},K)$-independent, we can apply the proved case to each $\tilde{I}_{A_{i}}E$ such that $P(A_{i})>0$.

Now let us define $T:E\rightarrow L^{0}(\mathcal{F},K^{n})$
by $Tx=(f_{1}(x),f_{2}(x),\cdots ,f_{n}(x))$, $\forall x \in E$,
then it is obvious that $T(E)$ is a submodule of $L^{0}(\mathcal{F},K^{n})$ and $T(E)$ has the countable concatenation property.
If $T(E)\neq L^{0}(\mathcal{F},K^{n})$, by Corollary \ref{cor2} there exists a nontrivial element
$z=(\eta_{1},\eta_{2},\cdots,\eta_{n})\in L^{0}(\mathcal{F},K^{n})$ such that
$$
(\sum_{k=1}^{n} \bar{\eta}_{k}f_{k})(x)=\sum_{k=1}^{n} \bar{\eta}_{k}f_{k}(x)=\langle T(x),z\rangle=0, \forall  x\in E,
$$
but this contradicts with the $L^{0}(\mathcal{F},K)$-independence of $\{f_{1},f_{2},\cdots,f_{n}\}$, and consequently $T(E)=L^{0}(\mathcal{F},K^{n})$.

Suppose $x_{1},x_{2},\cdots,x_{n}\in E$ such that $Tx_{i}=(\eta_{1}^{i},\eta_{2}^{i},\cdots,\eta_{n}^{i})$,
$\eta_{i}^{i}=1$ and $\eta_{j}^{i}=0(i\neq j)$ ($1\leqslant j \leqslant n$ and $1\leqslant i \leqslant n$).
Let $\gamma=\bigvee_{i=1}^{n}\|x_{i}\|$, clearly $\gamma>0$ on $\Omega$. If
$y=(\alpha_{1},\alpha_{2},\cdots,\alpha_{n}) \in L^{0}(\mathcal{F},K^{n})$ and
$\|y\|\leqslant(\beta+\varepsilon)n^{-1}\gamma^{-1}$ for some fixed $\varepsilon\in L^{0}_{++}(\mathcal{F})$,
then $T(\sum_{i=1}^{n} \alpha_{i}x_{i})=y$ and $$
\|\sum_{i=1}^{n} \alpha_{i}x_{i}\| \leqslant \sum_{i=1}^{n} |\alpha_{i}|\|x_{i}\|\leqslant(\beta+\varepsilon)n^{-1}\gamma^{-1}\sum_{i=1}^{n}\|x_{i}\|\leqslant\beta+\varepsilon.$$
Let $\bar{B}_{\beta+\varepsilon}=\{x\in E|\;\|x\|\leqslant\beta+\varepsilon\}$, the above argument shows that
$T(\bar{B}_{\beta+\varepsilon})$ contains an $\mathcal{T}_{c}$-open neighborhood
$\{y\in L^{0}(\mathcal{F},K^{n})~|~ \|y\|\leqslant(\beta+\varepsilon)n^{-1}\gamma^{-1}\}$ of the null
element of $L^{0}(\mathcal{F},K^{n})$. Moreover, it is easy to see that $T(\bar{B}_{\beta+\varepsilon})$
is also an $L^{0}$-convex subset with the countable concatenation property.

If $p:=(\xi_{1}, \xi_{2},\cdots ,\xi_{n})\notin T(\bar{B}_{\beta+\varepsilon})$ for some $\varepsilon\in L^{0}_{++}(\mathcal{F})$, then by Corollary \ref{lem5} there exists $f\in L^{0}(\mathcal{F},K^{n})^{*}$ such that
$(Ref)(y)\leqslant (Ref)(p)$ for all
$y \in T(\bar{B}_{\beta+\varepsilon})$, and
$(Ref)(p)>(Ref)(y)$ on $H(\{p\},[T(\bar{B}_{\beta+\varepsilon})]^{\circ})$ for all $y \in [T(\bar{B}_{\beta+\varepsilon})]^{\circ}$, specially, $(Ref)(p)>(Ref)(0)=0$ on $H(\{p\},[T(\bar{B}_{\beta+\varepsilon})]^{\circ})$.
Let $\xi=|f(y)|(f(y))^{-1}$ for any fixed $y\in T(\bar{B}_{\beta+\varepsilon})$,
then $\xi y\in T(\bar{B}_{\beta+\varepsilon})$ and
$$|f(y)|=f(\xi y)=(Ref)(\xi y)\leqslant (Ref)(p)\leqslant |f(p)|.$$

By Riesz's representation theorem in $\mathcal{T}_{c}$-complete $RIP$ module
\cite[Theorem 4.3]{rela} there exists $y_{0}=(\lambda_{1},\lambda_{2},\cdots \lambda_{n})\in L^{0}(\mathcal{F},K^{n})$
such that $f(y)=\langle y,y_{0}\rangle$, $\forall y\in L^{0}(\mathcal{F},K^{n})$. Then
$$
|\sum_{k=1}^{n} \bar{\lambda}_{k}f_{k}(x)|=|f(Tx)|\leqslant |f(p)|=|\sum_{k=1}^{n} \bar{\lambda}_{k}\xi_{k}|,\forall x\in \bar{B}_{\beta+\varepsilon}.
$$
Thus
\begin{center}
$
(\beta+\varepsilon)\|\sum_{k=1}^{n} \bar{\lambda}_{k}f_{k}\|^{*}=\bigvee_{x\in \bar{B}_{\beta+\varepsilon}}
|\sum_{k=1}^{n} \bar{\lambda}_{k}f_{k}(x)|\leqslant |\sum_{k=1}^{n} \bar{\lambda}_{k}\xi_{k}|.$
\end{center}

Since $|f(p)|=|\langle p,y_{0}\rangle |>0$ on $H(\{p\},[T(\bar{B}_{\beta+\varepsilon})]^{\circ})$, it follows that
$\|y_{0}\|>0$ on $H(\{p\},[T(\bar{B}_{\beta+\varepsilon})]^{\circ})$, and thus $\|\sum_{k=1}^{n} \bar{\lambda}_{k}f_{k}\|\neq 0$
on $H(\{p\},[T(\bar{B}_{\beta+\varepsilon})]^{\circ})$ by the $L^{0}(\mathcal{F},K)$-independence of $\{f_{1},f_{2},\cdots,f_{n}\}$, which yields
$$
\beta\|\sum_{k=1}^{n} \bar{\lambda}_{k}f_{k}\|^{*}<|\sum_{k=1}^{n} \bar{\lambda}_{k}\xi_{k}|
$$
\noindent on $H(\{p\},[T(\bar{B}_{\beta+\varepsilon})]^{\circ})$, and in turn a contradiction to the assumption.
\end{pot1}

\begin{remark}It is necessary to require $E$ to have the countable concatenation property in Theorem \ref{thm2}, otherwise
the result may not hold. Here is an example, let $E$ be defined as in Remark \ref{rek} and define $\|\cdot\|:E\rightarrow L^{0}_{+}(\mathcal{F})$ by $\|\eta\|=|\eta|$, $\forall\eta\in E$, then
$(E,\|\cdot\|)$ is also an $RN$ module over $K$ with base $(\Omega,\mathcal{F},P)$. If $f \in E^{*}$ is defined
by $f(\eta)=\eta$, $\forall\eta \in E$ and take $\xi=\tilde{I}_{\Omega}$, $\beta=\tilde{I}_{\Omega}$, then clearly $|\lambda\xi|\leqslant \beta\|\lambda f\|$ (in fact, $|\lambda\xi|= \beta\|\lambda f\|$), $\forall\lambda \in L^{0}(\mathcal{F},K)$, but there does not exist
any $x\in E$ such that $f(x)=\xi$.

\end{remark}

\begin{Corollary}
Suppose $(E,\|\cdot\|)$ is an $RN$ module over $K$ with base $(\Omega,\mathcal{F},P)$ such that $E$ has
the countable concatenation property, then for any $F\in E^{**}$, $\epsilon \in L^{0}_{++}(\mathcal{F})$ and $f_{1},f_{2},\cdots,f_{n} \in E^{*}$  there exists $x_{\epsilon}\in E$ such that

\begin{enumerate}

\item $f_{i}(x_{\epsilon})=F(f _{i})$, $i=1,2,\cdots ,n$;

\item $\|x_{\epsilon}\|\leqslant \|F\|^{**}+\epsilon$.

\end{enumerate}
Where $(E^{**},\|\cdot\|^{**})$ denotes the random conjugate space of $(E^{*},\|\cdot\|^{*})$.

\end{Corollary}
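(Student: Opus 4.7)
The plan is to deduce the corollary as a direct application of Theorem \ref{thm2} with the data $\xi_i := F(f_i)$ for $1\leqslant i\leqslant n$ and $\beta := \|F\|^{**}$. The given $\epsilon\in L^{0}_{++}(\mathcal{F})$ plays the role of the $\varepsilon$ in Theorem \ref{thm2}, and the desired $x_{\epsilon}\in E$ is then exactly the element supplied by the sufficiency direction of that theorem.

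The only thing to verify is the Helly-type inequality
$$|\textstyle\sum_{k=1}^{n}\lambda_{k}\xi_{k}|\leqslant \beta\,\|\textstyle\sum_{k=1}^{n}\lambda_{k}f_{k}\|^{*}\quad \text{for all } \lambda_{1},\ldots,\lambda_{n}\in L^{0}(\mathcal{F},K).$$
First I would recall from Definition \ref{lem4} (applied to the $RN$ module $(E^{*},\|\cdot\|^{*})$) that $F\in E^{**}$ is an $L^{0}(\mathcal{F},K)$-linear map from $E^{*}$ to $L^{0}(\mathcal{F},K)$ satisfying the a.s. bound $|F(g)|\leqslant \|F\|^{**}\|g\|^{*}$ for every $g\in E^{*}$. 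Using $L^{0}$-linearity,
$$\textstyle\sum_{k=1}^{n}\lambda_{k}\xi_{k}=\sum_{k=1}^{n}\lambda_{k}F(f_{k})=F\bigl(\sum_{k=1}^{n}\lambda_{k}f_{k}\bigr),$$
and since $\sum_{k=1}^{n}\lambda_{k}f_{k}\in E^{*}$, the defining inequality for $F$ yields
$$\bigl|\textstyle\sum_{k=1}^{n}\lambda_{k}\xi_{k}\bigr|=\bigl|F(\textstyle\sum_{k=1}^{n}\lambda_{k}f_{k})\bigr|\leqslant \|F\|^{**}\,\bigl\|\textstyle\sum_{k=1}^{n}\lambda_{k}f_{k}\bigr\|^{*}=\beta\,\bigl\|\textstyle\sum_{k=1}^{n}\lambda_{k}f_{k}\bigr\|^{*},$$
which is precisely the required condition.

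With this inequality in hand and with $E$ assumed to have the countable concatenation property, Theorem \ref{thm2} applies and furnishes, for the prescribed $\epsilon\in L^{0}_{++}(\mathcal{F})$, an element $x_{\epsilon}\in E$ satisfying $f_{i}(x_{\epsilon})=\xi_{i}=F(f_{i})$ for $i=1,2,\ldots,n$ and $\|x_{\epsilon}\|\leqslant \beta+\epsilon=\|F\|^{**}+\epsilon$. There is no real obstacle here: the whole point of the corollary is to package Theorem \ref{thm2} in the important special case in which the scalars $\xi_{i}$ arise as the values of a fixed element of the bidual evaluated at the $f_{i}$. The only place where care is needed is the invocation of $L^{0}(\mathcal{F},K)$-linearity of $F$ (not mere $K$-linearity) in order to pull the $\lambda_{k}\in L^{0}(\mathcal{F},K)$ through $F$; this is, however, built into the definition of $E^{**}$ as the random conjugate space of $(E^{*},\|\cdot\|^{*})$.
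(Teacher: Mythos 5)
Your proposal is correct and matches the paper's proof exactly: the paper likewise sets $\xi_i=F(f_i)$, $\beta=\|F\|^{**}$, notes that $L^{0}$-linearity of $F$ and the bound $|F(g)|\leqslant\|F\|^{**}\|g\|^{*}$ give the Helly inequality, and then invokes Theorem \ref{thm2}. Your write-up simply spells out the one-line verification the paper leaves implicit.
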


\begin{proof}
Note that $|\sum_{k=1}^{n}\lambda_{k}F(f _{i})|\leqslant \|F\|^{**}\|\sum_{k=1}^{n} \lambda_{k}f_{k}\|^{*}$ holds for any $\lambda_{1},\lambda_{2},\cdots \lambda_{n}\in L^{0}(\mathcal{F},K)$, then the result follows from Theorem \ref{thm2}.
\end{proof}

As another application of Theorem \ref{thm2}, we will give Corollary \ref{cor3} below, let us first recall the notions of random variables with values in a normed space and random linear functionals. In this section $(B,\|\cdot\|)$ always denotes a normed space over $K$.

A mapping $V$ from $(\Omega,\mathcal{F},P)$ to $(B,\|\cdot\|)$ is called a $B$-valued $\mathcal{F}$-random element \cite{BR,hans} if $V^{-1}(G):=\{\omega\in \Omega~|~V(\omega)\in G\}\in \mathcal{F}$ for any open subset $G$ of $B$.
A $B$-valued $\mathcal{F}$-random element is called simple if it only takes finitely many values. Further, a mapping $V$ from $(\Omega,\mathcal{F},P)$ to $(B,\|\cdot\|)$ is called a $B$-valued $\mathcal{F}$-random variable if there exits a sequence $\{V_{n},n\in N\}$ of simple $B$-valued $\mathcal{F}$-random elements such that $\{\|V_{n}(\omega)-V(\omega)\|,n\in N\}$ converges to $0$ as $n$ tends to $\infty$ for each $\omega \in \Omega$. It is easy to see from \cite{hans} that a $B$-valued $\mathcal{F}$-random element is a $B$-valued $\mathcal{F}$-random variable iff its range is a separable subset of $B$, and the notions of a $B$-valued $\mathcal{F}$-random element and a $B$-valued $\mathcal{F}$-random variable coincide when $B$ is separable.

Denote by $\mathcal{L}^{0}(\mathcal{F},B)$ the linear space of $B$-valued $\mathcal{F}$-random variables under the ordinary pointwise addition and scalar multiplication operations, and by $L^{0}(\mathcal{F},B)$ the linear space of equivalence classes of elements in $\mathcal{L}^{0}(\mathcal{F},B)$ under the ordinary operations on equivalence classes, where two elements in $\mathcal{L}^{0}(\mathcal{F},B)$ are called equivalent if they are equal a.s.. In particular, $L^{0}(\mathcal{F},B)$ is exactly $L^{0}(\mathcal{F},K)$ when $B=K$.

$L^{0}(\mathcal{F},B)$ becomes an $L^{0}(\mathcal{F},K)$-module when the module multiplication: $L^{0}(\mathcal{F},K)\times L^{0}(\mathcal{F},B)\rightarrow L^{0}(\mathcal{F},B)$ is defined by $\xi x=$the equivalence class of $\xi^{0} x^{0}$, where $\xi^{0}$ and $x^{0}$ are arbitrarily chosen representatives of $\xi$ in $L^{0}(\mathcal{F},K)$ and $x$ in $L^{0}(\mathcal{F},B)$, respectively, and $(\xi^{0} x^{0})(\omega)=\xi^{0}(\omega)\cdot x^{0}(\omega), \forall \omega \in \Omega$. Further, the norm $\|\cdot\|$ on $B$ induces an $L^{0}$-norm on $L^{0}(\mathcal{F},B)$, still denoted by $\|\cdot\|$, namely $\|x\|=$the equivalence class of $\|x^{0}\|$ for any $x\in L^{0}(\mathcal{F},B)$, where $x^{0}$ is as above and $\|x^{0}\|$ is the composition function of $x^{0}$ and the norm $\|\cdot\|$ on $B$. Then $(L^{0}(\mathcal{F},B),\|\cdot\|)$ becomes an $RN$ module over $K$ with base $(\Omega,\mathcal{F},P)$ and it, clearly, has the countable concatenation property.

A mapping $f^{0}:\Omega\times B\rightarrow K$ is called a random functional if $f^{0}(\cdot,b):\Omega \rightarrow K$ is a $K$-valued $\mathcal{F}$-random variable for each $b\in B$, further if, in addition, $f^{0}(\omega,\cdot):B \rightarrow K$ is a linear (continuous) functional on $B$ for each $\omega \in \Omega$ then $f^{0}$ is called a sample-linear (resp., sample-continuous) random functional.

Given a sample-linear and sample-continuous random functional $f^{0}:\Omega\times B\rightarrow K$, since $r^{0}(\omega):=$sup$\{|f^{0}(\omega,b)|~|~b\in B$ and $\|b\|\leqslant1\}<+\infty$ for each $\omega \in \Omega$, then we can always consider $\|f^{0}\|_{ess.}=$esssup$\{|f^{0}(\cdot,b)|~|~b\in B$ and $\|b\|\leqslant1\}$ as a nonnegative real-valued random variable, called the essential random norm of $f^{0}$. In particular we can take $\|f^{0}\|_{ess.}=r^{0}$ when $B$ is a separable normed space.

Let $f^{0}:\Omega\times B\rightarrow K$ be a sample-linear and sample-continuous random functional, then $f^{0}$ generates an a.s. bounded random linear functional $f:L^{0}(\mathcal{F},B)\rightarrow L^{0}(\mathcal{F},K)$ in the following way: for each $x\in L^{0}(\mathcal{F},B)$, $f(x)$ is defined as the equivalence class of $f^{0}(\cdot,x^{0}(\cdot))$, where $x^{0}$ is an arbitrarily chosen representative of $x$, then it is known from \cite{Radon,rand} that $\|f\|^{*}=$the equivalence class of $\|f^{0}\|_{ess.}$, further by the theory of the lifting propery \cite{ion1,ion2} we proved in \cite{Radon} that when $(\Omega,\mathcal{F},P)$ is a complete probability space every $f\in L^{0}(\mathcal{F},B)^{*}$ can also be generated by a sample-linear and sample-continuous random functional  $f^{0}:\Omega\times B\rightarrow K$ in the way as above, and $f^{0}$ is unique in the sense that $f^{0}(\omega,b)=g^{0}(\omega,b)$ a.s. for each fixed $b\in B$ if $g^{0}: \Omega\times B\rightarrow K$ is also a sample-linear and sample continuous random functional generating $f$.

\begin{Corollary} \label{cor3}
Let $f^{0}_{1},f^{0}_{2},\cdots,f^{0}_{n}:\Omega\times B\rightarrow K$ be $n$'s sample-linear and sample-continuous random functionals, $\xi^{0}_{1},\xi^{0}_{2},\cdots,\xi^{0}_{n}$ any given $n$'s $K$-valued $\mathcal{F}$-random variables and $\beta^{0}:\Omega\rightarrow [0,+\infty)$ an $\mathcal{F}$-random variable. Then for each a.s. positive real-valued $\mathcal{F}$-random variable $\varepsilon^{0}$ there exist a $B$-valued $\mathcal{F}$-random variable $x^{0}_{\varepsilon^{0}}$ and an $\mathcal{F}$-measurable subset $\Omega_{0}$ of probability one such that the following two items are satisfied:

(1) $f^{0}_{i}(\omega,x^{0}_{\varepsilon^{0}}(\omega))=\xi^{0}_{i}(\omega)$ for each $\omega\in \Omega_{0}$ and each $i$ such that $1\leqslant i \leqslant n$;

(2) $\|x^{0}_{\varepsilon^{0}}(\omega)\|\leqslant \beta^{0}(\omega)+\varepsilon^{0}(\omega)$ for each $\omega \in \Omega_{0}$

\noindent iff
\begin{equation}
|\Sigma_{i=1}^{n}\lambda^{0}_{i}(\omega)\xi^{0}_{i}(\omega)|\leqslant \beta^{0}(\omega)\|\Sigma_{i=1}^{n}\lambda^{0}_{i}f^{0}_{i}\|_{ess.}(\omega)~a.s.
\end{equation}
for any given $n$'s $K$-valued $\mathcal{F}$-random variables $\lambda^{0}_{1},\lambda^{0}_{2},\cdots,\lambda^{0}_{n}$.

In particular when $B$ is separable (3.1) is equivalent to the following:

\begin{equation}
|\Sigma_{i=1}^{n}\alpha_{i}\xi^{0}_{i}(\omega)|\leqslant \beta^{0}(\omega)\cdot\mbox{sup}\{|\Sigma_{i=1}^{n}\alpha_{i}f^{0}_{i}(\omega,b)|~|~b\in B~\mbox{and}~\|b\|\leqslant1\}~a.s.
\end{equation}
for any given $\alpha_{1},\alpha_{2},\cdots,\alpha_{n}\in K$.
\end{Corollary}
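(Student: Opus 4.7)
The plan is to reduce Corollary \ref{cor3} to Theorem \ref{thm2} applied to the $RN$ module $E=L^{0}(\mathcal{F},B)$, which was noted just before the statement to have the countable concatenation property. First I would translate the pointwise data into module-theoretic data. Write $f_{i}$, $\xi_{i}$, $\beta$, $\varepsilon$ for the equivalence classes in $E^{*}$, $L^{0}(\mathcal{F},K)$, $L^{0}_{+}(\mathcal{F})$, $L^{0}_{++}(\mathcal{F})$ generated by $f^{0}_{i}$, $\xi^{0}_{i}$, $\beta^{0}$, $\varepsilon^{0}$. By the correspondence recalled from \cite{Radon} immediately before the statement, each $f_{i}$ lies in $E^{*}$ and $\|f_{i}\|^{*}$ is the equivalence class of $\|f^{0}_{i}\|_{ess.}$; moreover $\sum_{i=1}^{n}\lambda^{0}_{i}f^{0}_{i}$ is still sample-linear and sample-continuous and its associated random linear functional is $\sum_{i=1}^{n}\lambda_{i}f_{i}$, where $\lambda_{i}$ is the class of $\lambda^{0}_{i}$.

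Because every element of $L^{0}(\mathcal{F},K)$ is the class of a $K$-valued random variable, condition (3.1) holding for all $\lambda^{0}_{i}$ is equivalent to the module inequality
\[
\Bigl|\sum_{i=1}^{n}\lambda_{i}\xi_{i}\Bigr|\leqslant \beta\Bigl\|\sum_{i=1}^{n}\lambda_{i}f_{i}\Bigr\|^{*},\qquad \forall \lambda_{1},\ldots,\lambda_{n}\in L^{0}(\mathcal{F},K).
\]
Applying Theorem \ref{thm2} produces some $x_{\varepsilon}\in E$ with $f_{i}(x_{\varepsilon})=\xi_{i}$ and $\|x_{\varepsilon}\|\leqslant \beta+\varepsilon$; any representative $x^{0}_{\varepsilon^{0}}$ realises these identities and inequality as a.s. pointwise statements, and intersecting the finitely many probability-one sets on which they hold yields the $\Omega_{0}$ required by the statement. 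The converse direction runs by passing a given $x^{0}_{\varepsilon^{0}}$ to its equivalence class $x_{\varepsilon}$ and invoking the (routine) necessity half of Theorem \ref{thm2}. This settles the first equivalence.

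For the separable case the nontrivial implication is (3.2)$\Rightarrow$(3.1); the other direction is immediate on specialising $\lambda^{0}_{i}$ to constants. Fix a countable dense set $D\subset K^{n}$, for example $(\mathbb{Q}+i\mathbb{Q})^{n}$. For each $\alpha\in D$, (3.2) holds off a $P$-null set; the union over $D$ remains null, so on a single probability-one set $\Omega^{*}$ condition (3.2) is valid simultaneously for every $\alpha\in D$. For each fixed $\omega\in\Omega^{*}$ both sides of (3.2) are continuous in $\alpha\in K^{n}$ (the left is polynomial, the right is $\beta^{0}(\omega)$ times the seminorm $\alpha\mapsto\sup_{\|b\|\leqslant 1}|\sum_{i}\alpha_{i}f^{0}_{i}(\omega,b)|$), so the inequality extends from $D$ to all of $K^{n}$ on $\Omega^{*}$. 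Substituting $\alpha_{i}=\lambda^{0}_{i}(\omega)$ for arbitrary random variables $\lambda^{0}_{i}$ then gives (3.1), using the fact (recalled just before Corollary \ref{cor3}) that when $B$ is separable $\|\cdot\|_{ess.}$ is literally the pointwise supremum over the unit ball.

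The main obstacle I expect is this last step: (3.2) a priori asserts an a.s. inequality only for each fixed deterministic $\alpha\in K^{n}$, while (3.1) demands a single almost-sure inequality valid for \emph{all} random $\lambda^{0}_{i}$ at once. The quantifier swap is precisely what forces separability of $B$ (so the essential supremum collapses to a pointwise one) and the joint continuity in $\alpha$ into the argument. Everything else is an essentially mechanical passage between the module formulation of Theorem \ref{thm2} and the pointwise formulation in terms of representatives.
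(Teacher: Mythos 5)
Your proposal is correct and follows essentially the same route as the paper: the first equivalence is obtained exactly by translating to the $RN$ module $L^{0}(\mathcal{F},B)$ and invoking Theorem \ref{thm2}, and the separable case is handled, as in the paper, by intersecting the null sets over a countable dense subset $Q^{n}$ of $K^{n}$, extending by continuity in $\alpha$, and then substituting $\alpha_{i}=\lambda^{0}_{i}(\omega)$. The only difference is that you spell out the quantifier-swap issue more explicitly than the paper does, which is a point in your favour rather than a deviation.
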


\begin{proof}
It is clear that (1) and (2) together implies (3.1), and thus we only need to prove that (3.1) also implies (1) and (2). In fact, for each $i$ such that $1\leqslant i \leqslant n$, let $f_{i}$ be the a.s. bounded random linear functional on $L^{0}(\mathcal{F},B)$ generated by $f^{0}_{i}$, $\xi_{i}$ the equivalence class of $\xi^{0}_{i}$, and $\beta$ and $\varepsilon$ the respective equivalence classes of $\beta^{0}$ and $\varepsilon^{0}$, then (3.1) amounts to the following:
$$|\sum_{i=1}^{n}\lambda_{i}\xi_{i}|\leqslant \beta\|\sum_{i=1}^{n}\lambda_{i}f_{i}\|^{*}$$
for any given $\lambda_{1},\lambda_{2},\cdots,\lambda_{n}\in L^{0}(\mathcal{F},K)$.

Thus by Theorem \ref{thm2} there exists $x_{\varepsilon}\in L^{0}(\mathcal{F},B)$ such that $f_{i}(x_{\varepsilon})=\xi_{i}$ for each $i=1,2,\cdots,n$ and $\|x_{\varepsilon}\|\leqslant \beta+\varepsilon$. Let $x^{0}_{\varepsilon^{0}}$ be an arbitrarily chosen representative of $x_{\varepsilon}$, then $f^{0}_{i}(\omega,x^{0}_{\varepsilon^{0}}(\omega))=\xi^{0}_{i}(\omega)$ a.s. for each $i=1,2,\cdots,n$ and $\|x^{0}_{\varepsilon^{0}}(\omega)\|\leqslant \beta^{0}(\omega)+\varepsilon^{0}(\omega)$ a.s., further let $\Omega_{0}=(\bigcap_{i=1}^{n}\{\omega\in \Omega~|~f^{0}_{i}(\omega,x^{0}_{\varepsilon^{0}}(\omega))=\xi^{0}_{i}(\omega)\})\cap\{\omega\in \Omega~|~\|x^{0}_{\varepsilon^{0}}(\omega)\|\leqslant \beta^{0}(\omega)+\varepsilon^{0}(\omega)\}$, then $x^{0}_{\varepsilon^{0}}$ and $\Omega_{0}$ meet our needs.

Finally, when $B$ is separable it is clear that (3.1) implies (3.2), we will prove that (3.2) also implies (3.1) as follows.

Let $Q$ be a countable dense subset of $K$, $Q^{n}=$the self-product of $n$'s copies of $Q$ and $\Omega_{1}=\bigcap_{(r_{1},r_{2},\cdots,r_{n})\in Q^{n}}\{\omega\in \Omega~|~|\Sigma_{i=1}^{n}r_{i}\xi^{0}_{i}(\omega)|\leqslant \beta^{0}(\omega)\cdot\mbox{sup}\{|\Sigma_{i=1}^{n}r_{i}f^{0}_{i}(\omega,b)|~|~b\in B~\mbox{and}~\|b\|\leqslant1\}\}$. Then $P(\Omega_{1})=1$ by (3.2) and for each $\omega\in \Omega_{1}$ we have:
$$|\Sigma_{i=1}^{n}\alpha_{i}\xi^{0}_{i}(\omega)|\leqslant \beta^{0}(\omega)\cdot\mbox{sup}\{|\Sigma_{i=1}^{n}\alpha_{i}f^{0}_{i}(\omega,b)|~|~b\in B~\mbox{and}~\|b\|\leqslant1\}$$
for any given $\alpha_{1},\alpha_{2},\cdots,\alpha_{n}\in K$.

Thus for each $\omega\in \Omega_{1}$ and any $\lambda^{0}_{1},\lambda^{0}_{2},\cdots,\lambda^{0}_{n}\in \mathcal{L}^{0}(\mathcal{F},K)$ we also have that $|\Sigma_{i=1}^{n}\lambda^{0}_{i}(\omega)\xi^{0}_{i}(\omega)|\leqslant\beta^{0}(\omega)\cdot$sup$\{|\Sigma_{i=1}^{n}\lambda^{0}_{i}(\omega)f^{0}_{i}(\omega,b)~|~b\in B$ and $\|b\|\leqslant1\}$, namely (3.1) holds.
\end{proof}

\begin{Corollary}\label{cor4}
Let $(B,\|\cdot\|)$ be a separable normed space, $(\Omega,\mathcal{F},P)$ a complete probability space, $f^{0}_{1},f^{0}_{2},\cdots,f^{0}_{n}:\Omega\times B\rightarrow K$, $\xi^{0}_{1},\xi^{0}_{2},\cdots,\xi^{0}_{n}$ and
$\beta^{0}$ the same as in Corollary \ref{cor3}. Then for each real-valued $\mathcal{F}$-random variable $\varepsilon^{0}$ satisfying $\varepsilon^{0}(\omega)>0$ for each $\omega\in \Omega$ there exists a $B$-valued $\mathcal{F}$-random variable $x^{0}_{\varepsilon^{0}}$ such that the following two items hold:

(1) $f^{0}_{i}(\omega,x^{0}_{\varepsilon^{0}}(\omega))=\xi^{0}_{i}(\omega)$ for each $\omega\in \Omega$ and each $i$ such that $1\leqslant i \leqslant n$;

(2) $\|x^{0}_{\varepsilon^{0}}(\omega)\|\leqslant \beta^{0}(\omega)+\varepsilon^{0}(\omega)$ for each $\omega \in \Omega$

\noindent iff $
|\Sigma_{i=1}^{n}\alpha_{i}\xi^{0}_{i}(\omega)|\leqslant \beta^{0}(\omega)\cdot\mbox{sup}\{~|\Sigma_{i=1}^{n}\alpha_{i}f^{0}_{i}(\omega,b)|~|~b\in B~\mbox{and}~\|b\|\leqslant1\}$ for each $\omega\in \Omega$ and any $\alpha_{1},\alpha_{2},\cdots,\alpha_{n}\in K$.
\end{Corollary}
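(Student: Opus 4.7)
The plan is to handle the two directions separately, deriving necessity directly and, for sufficiency, first obtaining an almost-sure solution from Corollary \ref{cor3} and then upgrading it to one that satisfies (1) and (2) at every point of $\Omega$ by a pointwise modification on the exceptional null set.

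For necessity, I would specialize the hypothesis to the constant tolerances $\varepsilon^0_m\equiv 1/m$ to obtain for each $m$ a $B$-valued $\mathcal{F}$-random variable $x^0_m$ satisfying items (1) and (2). Evaluating at any fixed $\omega\in\Omega$ gives
\[
\Bigl|\sum_{i=1}^n\alpha_i\xi^0_i(\omega)\Bigr|=\Bigl|\sum_{i=1}^n\alpha_if^0_i(\omega,x^0_m(\omega))\Bigr|\leqslant\bigl(\beta^0(\omega)+\tfrac{1}{m}\bigr)\sup\Bigl\{\Bigl|\sum_{j=1}^n\alpha_jf^0_j(\omega,b)\Bigr|:b\in B,\ \|b\|\leqslant 1\Bigr\},
\]
and letting $m\to\infty$ yields the pointwise inequality for every $\omega$ and every $\alpha_1,\ldots,\alpha_n\in K$.

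For sufficiency, the hypothesis of Corollary \ref{cor4} is a pointwise (everywhere) strengthening of condition (3.2) in Corollary \ref{cor3}, so Corollary \ref{cor3} supplies a $B$-valued $\mathcal{F}$-random variable $\hat{x}$ and a set $\Omega_0\in\mathcal{F}$ with $P(\Omega_0)=1$ on which both (1) and (2) hold. It remains to repair $\hat{x}$ on $\Omega\setminus\Omega_0$. For each fixed $\omega\in\Omega\setminus\Omega_0$ I would apply the \emph{classical} Helly theorem to the normed space $(B,\|\cdot\|)$, the bounded linear functionals $f^0_i(\omega,\cdot)$, the scalars $\xi^0_i(\omega)$, the bound $\beta^0(\omega)$ and the tolerance $\varepsilon^0(\omega)$: the assumption of Corollary \ref{cor4} is exactly the equivalent condition required there, and produces some $b_\omega\in B$ with $f^0_i(\omega,b_\omega)=\xi^0_i(\omega)$ for $1\leqslant i\leqslant n$ and $\|b_\omega\|\leqslant\beta^0(\omega)+\varepsilon^0(\omega)$. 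Defining $x^0_{\varepsilon^0}$ to equal $\hat{x}$ on $\Omega_0$ and $b_\omega$ on $\Omega\setminus\Omega_0$ then gives a map satisfying (1) and (2) at every $\omega\in\Omega$.

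The main obstacle, and the only place where the hypotheses ``$B$ separable'' and ``$(\Omega,\mathcal{F},P)$ complete'' are indispensable, is to check that this $x^0_{\varepsilon^0}$ is itself a $B$-valued $\mathcal{F}$-random variable, since the selection $\omega\mapsto b_\omega$ is made with no regard for measurability. Separability of $B$ reduces the requirement to Borel measurability, and for any open $G\subset B$ one has
\[
(x^0_{\varepsilon^0})^{-1}(G)=\bigl(\hat{x}^{-1}(G)\cap\Omega_0\bigr)\cup\bigl\{\omega\in\Omega\setminus\Omega_0:b_\omega\in G\bigr\};
\]
the first summand lies in $\mathcal{F}$ because $\hat{x}$ is measurable and $\Omega_0\in\mathcal{F}$, while the second is a subset of the null set $\Omega\setminus\Omega_0$ and hence belongs to $\mathcal{F}$ by the completeness of $(\Omega,\mathcal{F},P)$, so $x^0_{\varepsilon^0}$ is Borel measurable and the proof is complete.
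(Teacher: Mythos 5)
Your proof is correct and takes essentially the same route as the paper's: for sufficiency you invoke Corollary \ref{cor3} to get an almost-sure solution, repair it on the exceptional null set pointwise via the classical Helly theorem, and use completeness of $(\Omega,\mathcal{F},P)$ (together with separability of $B$) to keep measurability of the patched map. The paper dismisses necessity as clear, whereas you supply the standard $\varepsilon^{0}\equiv 1/m$ limiting argument, but the substance is identical.
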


\begin{proof}
Necessity is clear.

Sufficiency. By Corollary \ref{cor3} there exist a $B$-valued $\mathcal{F}$-random variable $y^{0}_{\varepsilon^{0}}$
and an $\mathcal{F}$-measurable subset $\Omega_{0}$ of probability one such that the following two items hold for each $\omega \in \Omega_{0}$ and $i=1,2,\cdots,n$:

(3) $f^{0}_{i}(\omega,y^{0}_{\varepsilon^{0}}(\omega))=\xi^{0}_{i}(\omega)$;

(4) $\|y^{0}_{\varepsilon^{0}}(\omega)\|\leqslant \beta^{0}(\omega)+\varepsilon^{0}(\omega)$.

For each $\omega\in \Omega\setminus \Omega_{0}$, by the classical Helly theorem there exists $V(\omega)\in B$ such that the following two assertions hold:

(3) $f^{0}_{i}(\omega,V(\omega))=\xi^{0}_{i}(\omega)$ for each $i=1,2,\cdots,n$;

(4) $\|V(\omega)\|\leqslant \beta^{0}(\omega)+\varepsilon^{0}(\omega)$.

Define $x^{0}_{\varepsilon^{0}}:\Omega\rightarrow B$ by
$$
x^{0}_{\varepsilon^{0}}(\omega)=\left \{
\begin{array}{ll}
y^{0}_{\varepsilon^{0}}(\omega), &\mbox{when~}\omega\in \Omega_{0};\\
V(\omega), &\mbox{when~}\omega\in \Omega\setminus \Omega_{0}.
\end{array}
\right.
$$
Then $x^{0}_{\varepsilon^{0}}$ is still a $B$-valued $\mathcal{F}$-random variable since $(\Omega,\mathcal{F},P)$ is complete, and $x^{0}_{\varepsilon^{0}}$ satisfies both (1) and (2).
\end{proof}

\begin{remark}
When $(B,\|\cdot\|)$ is a separable Banach space Corollary \ref{cor4} can also obtained from the measurable selection theorem \cite[Theorem 5.10]{wagner}. In fact, the necessity of this corollary is always clear, for the sufficiency let us define $G:\Omega\rightarrow 2^{B}$ as follows:
$$G(\omega)=\{b\in B~|~\|b\|\leqslant \beta^{0}(\omega)+\varepsilon^{0}(\omega)~\mbox{and}~ f^{0}_{i}(\omega,b)=\xi^{0}_{i}(\omega)~\mbox{for each}~i=1,2,\cdots,n\}$$
\noindent for each $\omega \in \Omega$. Then $G$ is $\mathcal{F}$-measurable and $G(\omega)$ is closed and nonempty for each $\omega\in \Omega$ by the classical Helly theorem, further by \cite[Theorem 5.10]{wagner} $G$ has an $\mathcal{F}$-measurable selection $x^{0}_{\varepsilon^{0}}$, which satisfies our desires. But the general cases such as Corollary \ref{cor3} and \ref{cor4} can not be obtained from \cite[Theorem 5.10]{wagner}, which exhibits the power of Theorem \ref{thm2}. Besides, Theorem \ref{thm2} will mainly serve the future development of the theory of $RN$ modules.
\end{remark}












\end{document}